\newcommand{\Y}{{\mathcal{Y}}}
\newcommand{\proj}{\mathbb{P}}
\newcommand{\Z}{\mathbb{Z}}
\newcommand{\ZZ}{{\mathsf{Z}}}
\newcommand{\OO}{\mathcal{O}}
\newcommand{\com}{\mathbb{C}}
\newcommand{\MM}{{\mathcal{M}}}
\newcommand{\cal}{\mathcal}
\DeclareFontFamily{OT1}{rsfs}{}
\DeclareFontShape{OT1}{rsfs}{n}{it}{<-> rsfs10}{}
\DeclareMathAlphabet{\curly}{OT1}{rsfs}{n}{it}
\newcommand\y{{\tilde{y}}}
\renewcommand\O{\mathcal O}
\newcommand\F{\mathcal F}
\newcommand\LL{\mathbb L}
\newcommand\PP{\mathbb P}
\newcommand\C{\mathbb C}
\newcommand{\bZ}{{\mathbb{Z}}}
\newcommand{\bL}{{\mathsf{L}}}
\newcommand{\bC}{{\mathbb{C}}}
\newcommand{\bA}{{\mathbb{A}}}
\newcommand{\bP}{{\mathbb{P}}}
\newcommand{\rt}[1]{\stackrel{#1\,}{\rightarrow}}
\newcommand{\Rt}[1]{\stackrel{#1\,}{\longrightarrow}}
\newcommand\To{\longrightarrow}
\newcommand\into{\hookrightarrow}
\newcommand\ito{\ar@{^{ (}->}[r]}
\newcommand{\Into}{\ensuremath{\lhook\joinrel\relbar\joinrel\rightarrow}}
\newfont{\bigtimesfont}{cmsy10 scaled \magstep5}
\newcommand{\bigtimes}{\mathop{\lower0.9ex\hbox{\bigtimesfont\symbol2}}}
\newcommand\id{\operatorname{id}}
\newcommand\Hom{\operatorname{Hom}}
\renewcommand\hom{\curly H\!om}
\newcommand\Pic{\operatorname{Pic}}
\newcommand\Hilb{\operatorname{Hilb}}
\newcommand\beq[1]{\begin{equation}\label{#1}}
\newcommand\eeq{\end{equation}}
\newcommand\beqa{\begin{eqnarray*}}
\newcommand\eeqa{\end{eqnarray*}}
\makeatletter \@addtoreset{equation}{section} \makeatother
\newtheorem{lem}[equation]{Lemma}
\newtheorem{prop}[equation]{Proposition}
\begin{document}

\title{On the motivic stable pairs invariants of $K3$ surfaces}
\author{S. Katz, A. Klemm, and R. Pandharipande\\
with an Appendix by R. P. Thomas}
\date{June 2019}
\maketitle

\begin{abstract}
For a $K3$ surface $S$ and a class $\beta \in \text{Pic}(S)$,
we study motivic invariants of stable pairs moduli spaces
associated to  3-fold
thickenings of $S$.
We conjecture suitable deformation and divisibility invariances
for the Betti realization.
Our conjectures, together with earlier  calculations of Kawai-Yoshioka,
imply a full determination of the theory in terms
of the Hodge numbers of the Hilbert schemes of points of $S$.
The work may be viewed as the third  in a sequence of formulas
starting with Yau-Zaslow and Katz-Klemm-Vafa (each recovering the
former). Numerical data suggest the
motivic invariants are linked to the Mathieu $\mathsf{M}_{24}$ moonshine phenomena.

The KKV formula and the Pairs/Noether-Lefschetz
correspondence together determine the BPS counts
of $K3$-fibered Calabi-Yau 3-folds in fiber classes in
terms of modular forms.
We propose a framework for a refined P/NL correspondence
for the motivic invariants of $K3$-fibered CY 3-folds. 
For the STU model, a complete conjecture is
presented. 
\end{abstract}

\setcounter{tocdepth}{1} 
\tableofcontents

\setcounter{section}{-1}

\baselineskip=18pt

\section{Introduction} \label{intro}
A beautiful connection between curve counting on $K3$ surfaces
and modular forms was conjectured in 1995 by Yau and Zaslow \cite{YZ}:
the generating series of the counts of rational curves in
primitive classes was conjectured to equal the inverse of the  discriminant 
$$\Delta(q)= q \prod_{n=1}^\infty (1-q^n)^{24}\ .$$ 
By  work of G\"ottsche \cite{G}, $\Delta(q)^{-1}$
was already known to 
arise as the generating series of the Euler characteristics of
Hilbert schemes of points of $K3$ surfaces $S$,
$$\sum_{n\geq 0} \chi\big(\text{Hilb}^n(S)\big) q^{n-1} = \frac{1}{\Delta(q)}\ .$$
An argument by Beauville \cite{B} in 1997 provided a geometric link
between curve counting in primitive classes and
the Euler characteristics of $\text{Hilb}^n(S)$.

A connection between the higher genus curve counts
on $K3$ surfaces and the generating series of $\chi_y$ genera
of $\text{Hilb}^n(S)$,
%AK: here i added a label
\begin{multline*}
%\label{unrefined}   
\sum_{n\geq 0} \chi_y\big(\text{Hilb}^n(S)\big) q^{n-1} = \frac{1}{q\prod_{n=1}^{\infty}
(1-yq^n)^2(1-q^n)^{20}(1-y^{-1}q^n)^2}\ ,
\end{multline*}
was proposed in 1999~\cite{KKV}. 
The conjectures of \cite{KKV} govern {\em all}
classes on $K3$ surfaces via a subtle divisibility invariance for 
 multiple classes. In the genus 0 primitive
case, the Yau-Zaslow conjecture is recovered. 
A proof of the KKV conjecture was recently found \cite{PT2}.
The moduli of sheaves (via stable pairs \cite{PT1}) on $K3$
surfaces play a central role. 

We propose here a third step in the sequence of conjectures
starting with Yau-Zaslow and KKV. We conjecture the
Betti realization of the motivic stable pairs theory of 
$K3$ surfaces is connected to the generating series
of Hodge numbers 
of $\text{Hilb}^n(S)$,
\begin{multline*}
\sum_{n\geq 0} \chi_{\text{Hodge}}\big(
\text{Hilb}^n(S)\big) q^{n-1} = \\
\frac{1}{q
\prod_{n=1}^\infty
(1-u^{-1}y^{-1} q^n)(1-u^{-1}y q^n) (1-q^n)^{20}(1-uy^{-1}q^n)
(1-u yq^n)}\ ,
\end{multline*}
where the variables $u$ and $y$ keep track of the Hodge grading. 
Our conjecture governs all curve classes and specializes
to the KKV conjecture after taking Euler characteristics.

In addition to the surprising divisibility invariance
already present in the KKV conjecture \cite{KKV}, we propose 
a new deformation
invariance of the Betti realization of the motivic invariants
of $K3$ geometries. To support our conjectures, we
provide a few basic calculations.

The KKV formula and the Pairs/Noether-Lefschetz
correspondence \cite{gwnl,PT2} together determine the BPS counts
of $K3$-fibered Calabi-Yau 3-folds in fiber classes in
terms of modular forms. 
We propose a framework for a refined P/NL correspondence
for the motivic invariants of $K3$-fibered Calabi-Yau 3-folds.
For the STU model, a complete conjecture is
provided. 

In the Appendix by R. Thomas, the Gopakumar-Vafa
perspective on the motivic invariants of $K3$ surfaces
is discussed. The relevant moduli spaces \cite{SKatz} are shown to
be nonsingular
even in the imprimitive case.
The results of the Appendix may be viewed as supporting our motivic stable
pairs conjectures in the larger framework of the
conjectural Pairs/Gopakumar-Vafa correspondence.{\footnote{An
elementary overview of the various correspondences for
curve counts on Calabi-Yau 3-folds can be found in \cite{13/2}.}}

\vspace{15pt}
\noindent{\em Acknowledgements}.
The formulation of the conjectures (and especially of
the Betti deformation invariance) was undertaken while
 A.K. and R.P. were visiting S.K. at the University of Illinois
in April 2014. 

We thank J. Bryan, J. Choi, D. Maulik, E. Scheidegger, and
V. Shende for many conversations
directly related to the motivic stable pairs invariants of $K3$ surfaces.
Much of the paper was written while R.P. was attending
the conference
{\em K3 surfaces and their moduli} on the island of 
Schiermonnikoog in May 2014 (organized
by C. Faber, G. Farkas, and G. van der Geer with support
from the {\em Compositio foundation}). Discussions there with B. Bakker, A. Bayer, D. Huybrechts, 
E. Macri, G. Oberdieck, and Q. Yin were very helpful. 
We thank G. Moore for suggesting a connection to Mathieu moonshine at
{\em String Math 2014} in Edmonton and M. Gaberdiel for related conversations.

Special thanks are due to R. Thomas for crucial help with the superpotential
investigations and for contributing the Appendix on the Gopakumar-Vafa
moduli approach.

S.K. was supported by NSF grant DMS-12-01089. A.K. 
was  supported by KL 2271/1-1 and 
DMS-11-59265.
R.P. was supported by 
grants SNF-200021-143274 and ERC-2012-AdG-320368-MCSK.

\section{Curve classes on $K3$ surfaces} \label{cc}
Let $S$ be a nonsingular projective $K3$ surface.
 The second cohomology of $S$ is a rank 22 lattice
with intersection form 
\begin{equation}\label{ccet}
H^2(S,\mathbb{Z}) \stackrel{\sim}{=} U\oplus U \oplus U \oplus E_8(-1) \oplus E_8(-1)\,,
\end{equation}
where
$$U
= \left( \begin{array}{cc}
0 & 1 \\
1 & 0 \end{array} \right)$$
and 
$E_8(-1)$
is the (negative) Cartan matrix. The intersection form \eqref{ccet}
is even.

The {\em divisibility}\, $m_\beta$ is
the maximal positive integer dividing the lattice
element $\beta\in H^2(S,\mathbb{Z})$.
If the divisibility is 1,
$\beta$ is {\em primitive}.
Elements with
equal divisibility and norm square 
$\langle \beta, \beta \rangle$
are equivalent up to orthogonal transformation \cite{CTC}.

The {\em Picard} lattice of $S$ is the intersection
$$\text{Pic}(S) = H^2(S,\mathbb{Z}) \cap H^{1,1}(S,\com)\ .$$
For a family of nonsingular $K3$ surfaces 
$$\pi: \mathcal{X} \rightarrow (\Delta,0)$$ with special fiber
$\mathcal{X}_0\cong S$
and trivial local system
$R^2\pi_*\mathbb{Z}$, the Noether-Lefschetz locus
associated to $\gamma \in H^2(S,\mathbb{Z})$
is 
$$NL_\gamma = \left\{ p\in \Delta  \ | 
\ \gamma\in \text{Pic}(\mathcal{X}_p)\ \right\}\ .$$
The Noether-Lefschetz locus is naturally a subscheme  
$NL_\gamma\subset \Delta$.

\section{Stable pairs motivic invariants} \label{spm}

Let $S$ be a nonsingular projective $K3$ surface.
Curve counting on $S$ may be approached via the {\em reduced}
virtual fundamental class of the moduli space of
stable maps to $S$ or the stable pairs theory of Calabi-Yau
3-fold thickenings of $S$. An equivalence relating these
two counts is proven in \cite{PT2} essentially using
\cite{gwnl,PP}.
We are interested here in the motivic invariants associated
to $S$. 
Since no motivic theory is available on the Gromov-Witten
side, we will consider here the moduli spaces
of stable pairs.

BPS counts for $S$ via stable pairs
were defined in \cite{PT2}. The construction uses $K3$-fibrations
sufficiently transverse to Noether-Lefschetz loci.
We follow the geometric perspective of \cite{PT2} to define a motivic theory
associated to $S$.

Let 
 $\alpha \in \text{Pic}(S)$
be a nonzero class which is both positive (with respect to
any ample polarization of $S$) and primitive.
%Let $\alpha\in \text{Pic}(S)$ be a nonzero class
%which is both effective and primitive.
%Of course condition $(\star)$ is satisfied if $\text{Pic}(S)\stackrel{\sim}{=}
%\mathbb{Z}\alpha$.
Let $T$ be a nonsingular 3-dimensional quasi-projective
 variety,
$$\epsilon: T \rightarrow (\Delta,0)\,,$$
fibered in $K3$ surfaces over a pointed
curve $(\Delta,0)$
satisfying:
\begin{enumerate}
\item[(i)] $\Delta$ is a nonsingular quasi-projective curve with trivial
canonical class,
\item[(ii)] $\epsilon$ is smooth, projective, and $T_0 \cong S$,
%\item[(iii)] the local 
%Noether-Lefschetz locus $\text{NL}(\alpha)\subset \Delta$ 
%corresponding to
%the class $\alpha \in \text{Pic}(S)$ is the
%{\em reduced} point $0\in \Delta$.
\end{enumerate}

The class $\alpha \in \text{Pic}(S)$ is {\em $m$-rigid}
with respect to the family $\epsilon$ if the
following further condition is satisfied:
\begin{enumerate}
\item[$(\star)$] for every
 {\em effective} decomposition{\footnote{An effective
decomposition requires all parts $\gamma_i$ to be effective
divisors.}} 
$$m\alpha=\sum_{i=1}^l \gamma_i
\in \text{Pic}(S)\,,$$
the  local Noether-Lefschetz locus $\text{NL}(\gamma_i)
\subset \Delta$ corresponding to
each  class $\gamma_i \in \text{Pic}(S)$ is the
{\em reduced} point $0\in \Delta$.
\end{enumerate}
Let $\text{Eff}({m}\alpha) \subset \text{Pic}(S)$
denote the subset of effective summands of 
$m\alpha$.  
The existence of $m$-rigid families is easy to see \cite[Section 6.2]{PT2}.

%Condition $(\star)$ implies (iii).

Assume $\alpha$ is ${m}$-rigid
with respect to the family $\epsilon$.
By property $(\star)$,
there is a compact, open, and closed component  
$$P_n^\star(T,\gamma) \subset P_n(T,\gamma)$$ 
parameterizing 
 stable pairs{\footnote{For any class $\gamma\in \text{Pic}(S)$,
we denote the push-forward to $H_2(T,\mathbb{Z})$ also
by $\gamma$.
Let $P_n(T,\gamma)$ be the moduli space of stable pairs
of Euler characteristic $n$ and class $\gamma \in H_2(T,\mathbb{Z})$.}}
supported set-theoretically over the point 
$0\in \Delta$ for {\em every} effective summand $\gamma\in 
\text{Eff}({m}\alpha)$.
We define
\begin{equation}\label{kri}
\mathsf{W}_{n,\gamma}^\star(T) \in \mathsf{K}^{\widehat{\mu}}_{\text{var}}[\mathsf{L}^{-1}]
\end{equation}
to be the motivic{\footnote{At the moment, 
$\mathsf{W}^\star_{n,\gamma}(T)$
is defined only after a choice of orientation data is
made. Our discussion implicitly assumes {\em either} that there
is a canonical choice {\em or} that the choice does not affect the
motivic class for our $K3$ geometry (or, at the very least,
does not affect the Poincar\'e polynomials of the motivic classes
here). Certainly  $P_n^\star(T,\gamma)$ is often 
simply connected. If $\pi_1$ is trivial, then the
orientation is unique. Perhaps $P_n^\star(T,\gamma)$ is always simply connected?
}}
stable pairs invariant associated to
the component $P_n^\star(T,\gamma)$ following Joyce and
collaborators \cite{BJM}.

The motivic invariant \eqref{kri} takes values in
the Grothendieck
ring 
 of varieties
  carrying actions of groups of $n^{th}$ roots of unity $\mathsf{K}_{\text{var}}^{\widehat{\mu}}$ extended by the inverse of the Tate class,
$$\mathsf{L}= [\mathbb{A}^1]\, .$$
The product in $\mathsf{K}^{\widehat{\mu}}_{\mathrm{var}}$ is {\em not} induced by the ordinary product of varieties, but rather defined
explicitly by 
motivic convolution with a Fermat curve \cite{DL2,L}.

Let $\rho$ denote the canonical action of
the finite group scheme $$\mu_2=\{\pm1\}$$ on itself.  
We obtain an element
\begin{equation*}
[\mu_2,\rho]\in \mathsf{K}^{\widehat{\mu}}_{\mathrm{var}}\ .
\end{equation*}
Straightforward calculation using the definition of the product or the 
motivic Thom-Sebastiani formula yields the relation
\begin{equation}\label{jq}
\left(1-[\mu_2,\rho]
\right)^2=\bL,
\end{equation}
\noindent so we define $\bL^{\frac{1}{2}}$ by
\begin{equation}
\bL^{\frac{1}{2}}=1-[\mu_2,\rho]\, ,
\label{l12}
\end{equation}
see \cite[Remark 19]{KS}.
The ring $\mathsf{K}^{\widehat{\mu}}_{\mathrm{var}}[
\bL^{-1}]$ therefore contains all powers of $\bL^{\pm 1/2}$.

\vspace{7pt}
\noindent{\bf Definition.} 
{\em Let $\alpha\in \text{\em Pic}(S)$ be a primitive, positive class.
Given a family
$\epsilon: T \rightarrow (\Delta,0)$ 
satisfying conditions (i), (ii), and $(\star)$ for $m\alpha$, let
 \begin{multline*}
%\label{fbbb}
\sum_{n\in \mathbb{Z}}
\mathsf{V}_{n,m\alpha}^\epsilon(S)\ q^n  = \\
\text{\em Coeff}_{v^{m\alpha}} \left[
\log\left(1 + \sum_{n\in \mathbb{Z}} \sum_{\gamma\in \text{\em Eff}({m}\alpha)} 
q^n v^{\gamma}\,
\mathsf{W}_{n,\gamma}^\star(T)\right) \right] \, .
\end{multline*}}
\vspace{7pt}

The motivic invariant $\mathsf{V}^\epsilon_{n,m\alpha}(S)$ is the main topic of the
paper. The superscript $\epsilon$ records the family
$$\epsilon:T \rightarrow (\Delta,0)$$
used in the definition.

For positive $\beta \in \text{Pic}(S)$, we may write $\beta = m\alpha$
where $\alpha \in \text{Pic}(S)$ is positive and primitive
and $m=m_\beta$ is the divisibility of $\beta$. Hence,
$$\mathsf{V}_{n,\beta}^\epsilon(S) =
\mathsf{V}_{n,m \alpha}^\epsilon(S) $$
is defined.

\label{abc}

\section{Conjectures A and B} \label{con1}

\subsection{Poincar\'e polynomial}
We formulate here  several conjectures and speculations
 concerning the motivic
stable pairs invariants $V^\epsilon_{n,\beta}(S)$ introduced in
Section \ref{abc}. 
Let
$$\mathsf{H}^\epsilon_{n,\beta}(S)\in \mathbb{Q}[u]$$
denote the virtual Poincar\'e
polynomial
of the motivic invariant 
$\mathsf{V}^\epsilon_{n,\beta}(S)$.

\vspace{15pt}
\noindent {\bf Conjecture A.} {\em The virtual
Poincar\'e polynomial
$\mathsf{H}^\epsilon_{n,\beta}(S)$ is independent of 
the family 
$$\epsilon: T \rightarrow (\Delta,0)$$
satisfying conditions (i), (ii), and ($\star$) for
$\alpha=\frac{1}{m_\beta} \beta$ and $m=m_\beta$.}

\vspace{15pt}

Assuming the validity of Conjecture A, we may drop
the $\epsilon$ superscript and write $\mathsf{H}_{n,\beta}(S)$
for the virtual Poincar\'e polynomial.

\vspace{15pt}
\noindent {\bf Conjecture B.} {\em The virtual
Poincar\'e polynomial
$\mathsf{H}_{n,\beta}(S)$ is invariant
under deformations of $S$ for which $\beta$
remains algebraic.}

\vspace{15pt}

The divisibility $m_\beta$ and the norm square
$$\langle \beta, \beta \rangle = 2h-2$$
are the only deformation invariants of the
pair $(S,\beta)$ with $\beta\in \text{Pic}(S)$.
Assuming the validity of Conjecture B,
we write 
$$\mathsf{H}_{n,\beta}(S)= \mathsf{H}_{n,m_\beta,h}\ ,$$
replacing $(S,\beta)$  by $m_\beta$ and $h$.

\subsection{Motives}
Conjecture A for the motivic invariant  
$\mathsf{V}^\epsilon_{n,\beta}(S)$ seems not unreasonable: {\em $\mathsf{V}^\epsilon_{n,\beta}(S)$ is independent of $\epsilon$}.
However, Conjecture B is certainly false 
with $\mathsf{V}^\epsilon_{n,\beta}(S)$ in place of $\mathsf{H}_{n,\beta}(S)$
since the
class 
$$[S]\in \mathsf{K}_{\text{var}}$$
often appears in $\mathsf{V}^\epsilon_{n,\beta}(S)$. Examples
of the latter phenomenon can be found  already for
elliptically fibered $K3$ surfaces in the fiber class.
The classes of $K3$ surfaces derived equivalent to $S$
also appear in the elliptically fibered cases.

We define a new quotient ring of varieties:
$$\mathsf{K}^{D}_{\text{var}}
 = \mathsf{K}^{\widehat{\mu}}_{\text{var}}\ /\ 
\mathsf{I}^D\ ,$$
where $\mathsf{I}^D$ is the ideal generated by all differences
$$[X]- [\widetilde{X}] \ \in \mathsf{K}^{\widehat{\mu}}_{\text{var}}$$
where $X$ and $\widetilde{X}$ are $K3$ surfaces which
are derived equivalent.{\footnote{Or perhaps $\mathsf{I}^D$ should be
the ideal generated by the differences of {\em all}
derived equivalent nonsingular projective Calabi-Yau 
varieties. Alternatively, 
Chow motives may be a more natural framework for the entire discussion.
}}
Then, we could hope the class 
$$\overline{\mathsf{V}}^\epsilon_{n,m\alpha}(S)\in
\mathsf{K}_{\text{var}}^D[\mathsf{L}^{-1}]\, ,$$
obtained from $\mathsf{V}_{n,m\alpha}^\epsilon(S)$,
is a universal polynomial in the motivic powers of the 
class of the underlying
$K3$ surface, 
$$\overline{[\text{Sym}^r S]}\in \mathsf{K}_{\text{var}}^D[\mathsf{L}^{-1}]
\ .$$
The coefficients of such a polynomial would lie in
$\mathbb{Q}[\mathsf{L},\mathsf{L}^{-1}]$. 

An interesting related question immediately arises. Let
$S$ be nonsingular projective $K3$ surface with a positive {\em irreducible}
class $\alpha \in \text{Pic}(S)$. The moduli space
$P_n(S,\alpha)$ is known to be nonsingular \cite{KY,PT22}. Is there
a formula for 
$$\overline{[P_n(S,\alpha)]}\in \mathsf{K}_{\text{var}}^D[\mathsf{L}^{-1}]
$$
as a universal polynomial in the motivic powers $\overline{[\text{Sym}^rS]}$?

While we have (modest) computational evidence for 
Conjectures A and B constraining $\mathsf{H}^\epsilon_{m,\beta}(S)$, the above 
hopes for
  $\mathsf{V}^\epsilon_{m,\beta}(S)$
are simply speculations.

\section{Calculation of Kawai-Yoshioka} \label{ky}

In order to formulate Conjecture C
which completely determines $\mathsf{H}_{n,m,h}$, we  first
review the calculation of Kawai-Yoshioka.

 Let $P_n(S,h)$ denote the moduli space 
of stable pairs on the $K3$ surface $S$ for a positive irreducible
class $\alpha$ satisfying
$$
2h-2= \langle \alpha, \alpha \rangle .
$$
%The cotangent bundle $\Omega_{P}$
%of
%the moduli space $P_n(S,h)$ is the obstruction
%bundle of the reduced theory. Since the
%dimension of $P_n(S,h)$ is $2h-2+n+1$,
%$$I_h(y) = \frac{1}{t}\sum_{n} (-1)^{2h-1+n} e(P_n(S,h))\  y^n.$$
The moduli space $P_n(S,h)$ is nonsingular of dimension
$2h-1+n$.
The Poincar\'e polynomial{\footnote{All the cohomology of $P_n(S,h)$ is even.}} of 
$P_n(S,h)$,
$$\widetilde{\mathsf{H}}(P_n(S,h))=\sum_{i=0}^{2h-1+n} \text{dim}_{\mathbb{Q}}\,
H^{2i}(P_n(S,h),\mathbb{Q}) \ u^{2i}\ \ \ 
 \in \mathbb{Z}[u]\, ,$$ 
has been calculated by Kawai-Yoshioka.
By Theorem 5.158 of \cite{KY},
\begin{multline*}
 {\left(u^2\y-1\right)\left(1-{\y}^{-1}\right)} \cdot
\sum_{h=0}^\infty \sum_{n=1-h}^\infty  
\widetilde{\mathsf{H}}(P_n(S,h))\  u^{-2h} \y^n q^h = \\
\prod_{n=1}^\infty
 \frac{1}{
(1-\y q^n)(1-u^2\y q^n) (1-q^n)^{20}
(1-\y^{-1}q^n)(1-(u^2\y)^{-1}q^n)} \,.
\end{multline*}

In order to fit our motivic conventions in Section \ref{con2}, we define
$${\mathsf{H}}(P_n(S,h))=u^{-2h+1-n}\, \widetilde{\mathsf{H}}(P_n(S,h))$$
and rewrite the 
Kawai-Yoshioka formula
as
\begin{multline*}
 u^{-1}{\left(u^2\y-1\right)\left(1-{\y}^{-1}\right)} \cdot
\sum_{h=0}^\infty \sum_{n=1-h}^\infty  
\mathsf{H}(P_n(S,h))\  u^{n} \y^n q^h = \\
\prod_{n=1}^\infty
 \frac{1}{
(1-\y q^n)(1-u^2\y q^n) (1-q^n)^{20}
(1-\y^{-1}q^n)(1-(u^2 \y)^{-1}q^n)} \,.
\end{multline*}
After the substitution $y=u\y$, we find
\begin{multline*} 
 {\left(uy-1\right)\left(u^{-1}-{y}^{-1}\right)} \cdot
\sum_{h=0}^\infty \sum_{n=1-h}^\infty  
\mathsf{H}(P_n(S,h))\  y^n q^h = \\
\prod_{n=1}^\infty
 \frac{1}{
(1-u^{-1}y^{-1} q^n)(1-u^{-1}y q^n) (1-q^n)^{20}(1-uy^{-1}q^n)
(1-u yq^n)} \,.
\end{multline*}
The right side of the above formula is the generating series
of Hodge polynomials of the Hilbert schemes of points of $S$.

%\noindent {\bf Conjecture C.} {\em The Poincar\'e
%polynomial
%$\mathsf{H}_{n,m,h}$ is independent of $m$.}
\vspace{10pt}

\section{Refined Gopakumar-Vafa invariants} \label{gv}
\subsection{Definition}
Refined Gopakumar-Vafa invariants were defined mathematically in \cite{CKK}
via refined stable pairs invariants.{\footnote{Other definition
has been proposed in \cite{HST,KL}.
Connecting the refined invariants for $K3$ surfaces $\mathsf{R}^h_{j_L,j_R}$ 
defined here to the geometry of \cite{HST,KL} is the topic
of the Appendix by R. Thomas.}}
Following \cite{CKK}, 
we will define refined
invariants
$$\mathsf{R}_{j_L,j_R}^h\in \mathbb{Z}$$ for $K3$ surfaces
for all $h\in \mathbb{Z}$ and 
all half-integers
$$j_L,j_R \in\ \frac{1}{2}\mathbb{Z}_{\geq 0} =\big\{0,\frac{1}{2},1,
\frac{3}{2}, \ldots\big\}\ .$$ 
The definition uses formula (8.1) of \cite{CKK} and
the generating series of
Hodge numbers of the Hilbert schemes of points
of $K3$ sufaces of Section \ref{ky}.
If $h<0$, the definition 
is simple:
 $$\mathsf{R}_{j_L,j_R}^{h<0}=0\ .$$

For the $h\geq 0$ case, we 
will use the following notation.
For $j \in \frac{1}{2}\mathbb{Z}_{\geq 0}$, we define
%{\footnote{Since
%$j$ maybe a half-integer, an implicit square root may occur.
%For example $$[\frac{1}{2}]_{u^2}=u + u^{-1}\ .$$ \vspace{-10pt}}}
$$[j]_{x} = x^{-2j} + x^{-2j+2} + \cdots +
x^{2j-2}+ x^{2j}\ .$$
We define the refined invariants
$\mathsf{R}_{j_L,j_R}^{h\geq 0}$ for $K3$ surfaces  by
\begin{multline}\label{DDD}
\sum_{h=0}^\infty \sum_{j_L}
\sum_{j_R} 
%(-1)^{2(j_L+j_R)}\mathsf{R}_{j_L,j_R}^h\ 
%[j_L]_{(-u)^2}[j_R]_{(-y)^2}
\mathsf{R}_{j_L,j_R}^h\ 
[j_L]_{y}[j_R]_{u}\,
%\frac{(-u)^{-2j_L} - (-u)^{2j_L+2}}{1-u^2}
%\frac{(-y)^{-2j_R} - (-y)^{2j_R+2}}{1-q^2}
q^h= \\
\prod_{n=1}^\infty
 \frac{1}{
(1-u^{-1}y^{-1} q^n)(1-u^{-1}y q^n) (1-q^n)^{20}(1-uy^{-1}q^n)
(1-u y q^n)} \,.
\end{multline}
Here, the sums over $j_L$ and $j_R$ are both taken over
$\frac{1}{2}\mathbb{Z}_{\geq 0}$.

The above definition of 
  $\mathsf{R}_{j_L,j_R}^h$ will be discussed further in
Section \ref{con2}.  Formula \eqref{DDD} will be shown there to be
consistent with 
the Kawai-Yoshioka calculation of Section \ref{ky} via
the definition of the refined invariants in \cite{CKK}. In fact,
consistency with the Kawai-Yoshioka calculation
{\em forces} formula \eqref{DDD} for $\mathsf{R}_{j_L,j_R}^h$.

The refined invariants $\mathsf{R}_{j_L,j_R}^h$ may be viewed
as arising from the cohomology of 
$P_n(S,h)$, the moduli space  
of stable pairs on the $K3$ surface $S$ with positive irreducible
class $\alpha$ satisfying 
$$\langle \alpha, \alpha \rangle=2h-2\ .$$
Formula \eqref{DDD} also agrees with the 
refined invariant for $K3$ surfaces calculated in 
\cite{Huang:2013yta}.\footnote{%Different sign conventions were used in \cite{Huang:2013yta}. 
See  equations (8.3) and (8.4) of \cite{Huang:2013yta}  as 
well as Table 8 in Appendix D.2.}

Formula \eqref{DDD} uniquely determines $\mathsf{R}_{j_L,j_R}^h$
for $h\geq 0$. 
As  a consequence, the following
stabilization
property holds: {\em for fixed $i$ and $j$, the refined invariant
$\mathsf{R}^h_{\frac{h}{2}-i,\frac{h}{2}-j}$ is independent of $h$ for
sufficiently large $h$}.{\footnote{Refined invariants for local $\proj^2$  were found to stabilize in [CKK].  
We expect stabilization to hold more generally.}}

\begin{table}[h]
\begin{center} {% \footnotesize 
\begin{tabular} {|c|c|} \hline ${{\mathsf{R}^0_{\frac{i}{2},\frac{j}{2}}}_{}}$  & i=0 \\  \hline  j=0 & 1\\ \hline  \end{tabular}} 
\hspace{0.5cm}
\begin{tabular} {|c|cc|} \hline  ${\mathsf{R}^1_{\frac{i}{2},\frac{j}{2}}}_{}$  & j=0 & 1 \\  
\hline  i=0 & 20 &  \\ 1  &  & 1 \\  \hline \end{tabular} 
\hspace{0.5cm}
\begin{tabular} {|c|ccc|} \hline 
 ${\mathsf{R}^2_{\frac{i}{2},\frac{j}{2}}}_{}$   & j=0 & 1 & 2 \\  \hline i=0 & 231 &  &  \\  1 & 
 & 21 &  \\  2 &  &  & 1 \\  \hline \end{tabular} \hspace{0.0cm}
\end{center}
\end{table}
\vskip 0cm
\begin{table}[h]
\begin{center}
 \begin{tabular} {|c|cccc|} \hline  ${\mathsf{R}^3_{\frac{i}{2},\frac{j}{2}}}_{}$  & j=0 & 1 & 2 & 3 \\  \hline i=0 & 1981 &  & 1 &  \\  
1 &  & 252 &  &  \\  2 & 1 &  & 21 &  \\  3 &  &  & 
 & 1 \\  \hline \end{tabular} 
\hspace{0.5cm}
\begin{tabular} {|c|ccccc|} \hline  ${\mathsf{R}^4_{\frac{i}{2},\frac{j}{2}}}_{}$ 
 &j=0 & 
1 & 2 & 3 & 4 \\  \hline i=0 & 13938 &  & 21 &  &  \\  1 &  & 
2233 &  & 1 &  \\  2 & 21 &  & 253 &  &  \\  3 &  & 1 &  
& 21 &  \\  4 &  &  &  &  & 1 \\  \hline 
\end{tabular}
% \begin{table}[h]
% \begin{center} 
% \begin{tabular} {|c|cccccc|} \hline $R^4_{\frac{i}{2},\frac{j}{2}}$
% & j=0 & 1 & 2 & 3 & 4 & 5 \\  \hline i=0 & 84777 
%&  & 253 &  &  &  \\  1 &  & 16171 &  & 22 &  &  \\  2 & 
% 253 &  & 2254 &  & 1 &  \\ 3 &  & 22 &  & 253 &  &  \\  
% 4 &  &  & 1 &  & 21 &  \\  5 &  &  &  &  &  & 1 \\  \hline \end{tabular} 
\vskip .4cm
\caption{All nonvanishing $\mathsf{R}^{h}_{j_L,j_R}$ for $h\le 4$ for 
$K3$ surfaces}
\label{tableBettiK3}
\end{center}
\end{table}
\vskip -.7cm

\subsection{Unrefined BPS invariants}
Let $X$ be a nonsingular projective Calabi-Yau 3-fold, and
let $\beta\in H_2(X,\mathbb{Z})$ be a curve class.

The relation of the refined BPS invariants $\mathsf{N}_{j_Lj_R}^\beta$ 
of $X$ to the 
unrefined BPS invariants $\mathsf{n}_g^\beta$ of $X$ is obtained
by from their definitions in terms of 
traces on the BPS Hilbert space ${\cal H}_{BPS}$
arising from wrapping $M5$ branes on curves in $X$.
The Hilbert space ${\cal H}_{BPS}$ carries an 
$$SU(2)\times SU(2)= SU(2)_L \times SU(2)_R$$
action. 
We denote the
irreducible representations of $SU(2)\times SU(2)$ by pairs
$[j_L,j_R]$ where $j_L,j_R\in \frac{1}{2} \mathbb{Z}_{\geq 0}$. 
%In the physical definition,
%the representation $[j_L,j_R]$ labels the spin content of the 
%5d BPS states with multiplicity 
%$\mathsf{R}^h_{j_L,j_R}$.
The refined invariants arise via the formula:
\begin{equation}\label{zxx}
{\rm Tr}_{{\cal H}_{BPS}}\,y^{\sigma_3^L} u^{\sigma_3^R} q^H=
\\\sum_\beta \sum_{j_L,j_R\in\frac{1}{2} \mathbb{Z}_{\ge 0}} \mathsf{N}^\beta_{j_L,j_R}\, [j_L]_{y}\, [j_R]_{u}\, q^\beta \ .
\end{equation}         
Similarly, the unrefined invariants arise as: 
\begin{equation} \label{zxxx}
{\rm Tr}_{{\cal H}_{BPS}}\,(-1)^{F_R} y^{\sigma_3^L} q^H =\sum_\beta \sum_{g\in \mathbb{Z}_{\ge 0} } 
\mathsf{n}^\beta_{g} \left(y^\frac{1}{2}+y^{-\frac{1}{2}}\right)^{2 g} q^\beta \ .  
\end{equation}
Here, 
$(-1)^{F_R}$ acts as $(-1)^{2j_R}$ on $[j_R]$. 
The matrix
$$\sigma_3=\left(\begin{array}{cc}1&0\\ 0& -1\end{array}\right)$$
 is twice the Cartan 
element of $SU(2)$ measuring the spins of the BPS state.
The operator $H$ measures the mass of the BPS state.

Let $I_g$ denote the $SU(2)$ representation associated to the cohomology
of an abelian variety of dimension $g$,
$$I_g=\left(2[0]+\left[\frac{1}{2}\right]\right)^{\otimes \, g}\ .$$  
%A change of basis of $SU(2)$ representations is
%determined by: 
%$$
%\left[\frac{n}{2}\right]_{y^2}=\sum_{g=0}^n M_{n,g}   \left[I_g\right]_{y^2}\ .%$$
%Here, $M_{n,g}$ is
%a lower triangular matrices with ones along the diagonal:  
%$$M_{n,g}= (-1)^{n+g}\left(n+g-1\atop n-g\right),\quad (M^{-1})_{gn}=\left(2g\a%top g-n\right)-\left(2g\atop g-n-2\right)\ . $$
By taking the trace on $I_g$, we obtain the following identity:
$$ 
{\rm Tr}_{I_g} \, y^{\sigma_3}= \left(y^\frac{1}{2}+y^{-\frac{1}{2}}\right)^{2 g}\ .
%=\left[I_g\right]_{y^2}\ .
$$ 
Setting $u=-1$ in \eqref{zxx} and using \eqref{zxxx},
we obtain the basic relationship between the refined and
unrefined invariants:
\begin{equation} \label{tgg12}
\sum_{j_L,j_R\in \frac{1}{2} \mathbb{Z}_{\ge 0}} (-1)^{2j_R}(2 j_R+1) 
\mathsf{N}^\beta _{j_L, j_R}\, \left[j_L\right]=\sum_{g\in \mathbb{Z}_{\ge 0}} 
\mathsf{n}^\beta_g\, I_g\ . 
\end{equation}

If we specialize \eqref{tgg12} to the refined invariants
$\mathsf{R}^h_{j_L,j_R}$
of $K3$ surfaces and change variables
$$y\mapsto -y$$
in \eqref{DDD}, we obtain
\begin{equation*} 
\sum_{j_L,j_R\in \frac{1}{2} \mathbb{Z}_{\ge 0}} (2 j_R+1) 
\mathsf{R}^h _{j_L, j_R}\, \left[j_L\right]_{y}=\sum_{g\in \mathbb{Z}_{\ge 0}} 
(-1)^g\mathsf{r}^h_g\, \left(y^\frac{1}{2}-y^{-\frac{1}{2}}\right)^{2 g}
\ 
\end{equation*}
and recover the
KKV formula for the BPS counts $\mathsf{r}^h_g$
of $K3$ surfaces from \eqref{DDD},
\begin{multline*}
\sum_{h=0}^\infty \sum_{g\geq 0}
%(-1)^{2(j_L+j_R)}\mathsf{R}_{j_L,j_R}^h\ 
%[j_L]_{(-u)^2}[j_R]_{(-y)^2}
(-1)^g \mathsf{r}_{g}^h\ 
\left(y^\frac{1}{2}-y^{-\frac{1}{2}}\right)^{2 g}
\,
%\frac{(-u)^{-2j_L} - (-u)^{2j_L+2}}{1-u^2}
%\frac{(-y)^{-2j_R} - (-y)^{2j_R+2}}{1-q^2}
q^h= \\
\prod_{n=1}^\infty
 \frac{1}{
(1-y^{-1} q^n)(1-y q^n) (1-q^n)^{20}(1-y^{-1}q^n)
(1-y q^n)} \, ,
\end{multline*}
in accordance with \cite{gwnl,PT2}.

\vspace{+10pt}
\begin{table}[h]
\begin{center}
\begin{tabular}{|c|ccccc|}
        \hline
\textbf{}
$\mathsf{r}_{g}^h$&    $h= 0$ & 1  & 2 & 3 & 4 \\
        \hline 
$g=0$ & $1$ & $24$ & $324$ & 
$3200$ &$25650$  \\
1      &  & $-2$ & 
$-54$ & $-800$  & $-8550$      \\
2      & & & $3$ & 
$88$ & $1401$       \\
3      & &  & 
 & $-4$  & $-126$       \\
4      &  &  & 
 &   & 5       \\
       \hline
\end{tabular}
\vskip .4cm
\caption{All nonvanishing $\mathsf{r}^{h}_{g}$ for $h\le 4$ for  
$K3$ surfaces}
\end{center}
\end{table}

\subsection{Mathieu moonshine}
The important conceptual difference between the invariants
$\mathsf{N}^\beta_{j_L,j_R}$ and $\mathsf{n}^\beta_g$ 
is that the former are actual BPS degeneracies.  Hence,
we expect $\mathsf{N}_{j_L,j_R}^\beta$ to always be nonnegative.
Moreover, if there is a symmetry  acting 
on the Hilbert space which commutes with $H$  and the spin operators,
the multiplicities 
must  fall in representations of the symmetry group. 
For  $E_n$ del Pezzo surfaces, 
the invariants $\mathsf{N}^\beta_{j_rj_R}$ were observed in \cite{Huang:2013yta}
to fall naturally in representations of $E_n$.
The Weyl group of $E_n$ acts on the geometry 
by Lefshetz  monodromy. 

Greg Moore pointed out to us at {\em String Math 2014} during the 
presentation~\cite{Sheldonstalk} 
that the number $\mathsf{R}^2_{0,0}=231$ is  the dimension of an  
irreducible representation of the 
Matthieu group $\mathsf{M}_{24}$, a sporadic group of order $244823040$, 
which is  conjecturally \cite{Eguchi:2010ej} related to the elliptic genus of
the  
$K3$ surface. 
The dimensions of the 26 irreducible representations of 
the group $\mathsf{M}_{24}$ are
$$ 
\begin{array}{rl}
&1, 23, 45, 231, 252, 253, 483, 770, 990, 1035, 1265, 
\\& 
1771, 2024,  2277, 3312, 3520, 5313, 5544, 5796, 10395\ , 
\end{array}
$$
where the representations of dimension  $45,231,770,990,1035$ come 
in complex conjugated pairs. There is an extra real representation of 
dimension $1035$. 

We have checked all the values taken by refined invariants 
$\mathsf{R}^h_{j_L,j_R}$ in Table 1, except for the values
 $20$ (and $21=20+1$), are expressible 
in a simple  way in
terms of the dimensions of the irreducible
 representations of $\mathsf{M}_{24}$. Beside the dimensions  
which occur directly, we have:
$$\begin{array}{llrl}
{\mathsf{R}^3_{0,0}}_{}&=1981=2\cdot 990+1 \ \ \qquad  &   
\mathsf{R}^4_{0,0}&= 13938=2\cdot 5313+3312  \\ 
{\mathsf{R}^4_{\frac{1}{2},\frac{1}{2}}}_{}& =2233=2 \cdot 990+ 253  &  
\mathsf{R}^5_{1,1}&=2254=1771+483 \\ 
\mathsf{R}^6_{\frac{3}{2},\frac{3}{2}}&=2255=1265+990\, . &
\end{array}
$$  
Since the dimensions of the representations of $\mathsf{M}_{24}$ are 
small, the significance is somewhat limited.
The decompositions with a minimal 
numbers of summands given above are not always unique. For example, 
$$\mathsf{R}^6_{\frac{3}{2},\frac{3}{2}}=2024+231\, \text{ \ and \ } 
\mathsf{R}^4 _{0,0}=10395+3520+23\, .$$ 
Nevertheless, the action of the Matthieu group is expected{\footnote{See \cite{CCC,GG}
for related constructions and \cite{CDHK} for subsequent developments.}}
in the string 
compactification on $K3\times T^2$  and the refined spacetime BPS 
spectrum is a natural place to see its action.

\section{Conjecture C} \label{con2}

Our motivic convention is the following.{\footnote{Our conventions
here distribute the signs in a slightly different manner than the
conventions of \cite{CKK}, but there is no essential difference.}}
 If the moduli space
of stable pairs $P$ is a nonsingular variety of complex
dimension $d$, then
the associated motivic invariant is defined\footnote{The motivic invariant of $P$ may depend on a choice of orientation.  We have made an implicit choice here as explained in Section \eqref{vm} -- the only choice possible if 
$\text{Pic}(P)$ has no 2-torsion.}  to be
$$\mathsf{L}^{-\frac{d}{2}} [P]\ \in\ 
\mathsf{K}_{\text{var}}^{\widehat{\mu}}[\mathsf{L}^{-1}]\ .$$
Here, $[P]$ is the usual element associated to $P$ in
the Grothendieck ring of varieties.
When considering the virtual Poincar\'e polynomial,
$$\widetilde{\mathsf{H}}:\mathsf{K}_{\text{var}}^{\widehat{\mu}}(\mathsf{L}^{-1}) \rightarrow
 \mathbb{Z}[u,u^{-1}]\, ,$$
we map $\mathsf{L}^{\pm\frac{1}{2}} \mapsto u^{\pm 1}$.
We define
$\mathsf{H}(P)$ to be the virtual Poincar\'e polynomial of 
$\mathsf{L}^{-\frac{d}{2}} [P]$.

Let $\alpha\in \text{Pic}(S)$ be a positive irreducible class of norm
square
$$\langle \alpha,\alpha \rangle = 2h-2 \, .$$
Then $P_n(S,h)$ is nonsingular of dimension $2h-1+n$.
Hence, the relation
$${\mathsf{H}}(P_n(S,h))=u^{-2h+1-n}\, \widetilde{\mathsf{H}}(P_n(S,h))$$
of Section \ref{ky} is consistent with our motivic conventions.

An elementary verification based upon the interpretation of the
Kawai-Yoshioka calculation in Section \ref{ky} and the
definition of the refined invariants in Section \ref{gv} yields
the following identity: the $v^\alpha$ coefficient of the
product
\begin{equation} \label{fred}
\prod_{j_L,j_R,m_L,m_R,m,j}\,
%\prod_{m_L=-j_L}^{j_L}
%\prod_{m_R=-j_R}^{j_R}
%\prod_{m=1}^\infty
%\prod_{j=0}^{m-1}
\big(1+u^{-m+1+2j-2m_R}\, y^{m-2m_L}\, v^{\alpha}\big)^{(-1)^{2(j_L+j_R)}
\mathsf{R}^h_{j_L,j_R}}
\end{equation}
exactly equals
\begin{equation}\label{vv34}
\sum_{n=1-h}^\infty  
\mathsf{H}(P_n(S,h))\  y^n\ .
\end{equation}
The product $\prod_{j_L,j_R,m_L,m_R,m,j}$ in \eqref{fred} signifies
\begin{equation} \label{george}
\prod_{j_L \in \frac{1}{2}\mathbb{Z}_{\geq 0}}\
\prod_{j_L \in \frac{1}{2}\mathbb{Z}_{\geq 0}}\
\prod_{m_L=-j_L}^{j_L}\
\prod_{m_R=-j_R}^{j_R}\
\prod_{m=1}^\infty\
\prod_{j=0}^{m-1}
\end{equation}
where $m_L$ and $m_R$ increase by steps of 1.

The product \eqref{fred} occurs in the definition of the
refined invariants \cite[Equation (8.1)]{CKK}. 
The equality of \eqref{fred} and \eqref{vv34} is a 
geometric constraint verified by definition \eqref{DDD}.
In fact, definition \eqref{DDD} is uniquely determined
by the above constraint.

To state our last conjecture, let
$\alpha\in \text{Pic}(S)$ be a positive, primitive class of norm square
$$\langle \alpha,\alpha \rangle = 2h-2 \, .$$
We will consider the motivic partition function
for classes which are multiples of $\alpha$,
$$\mathsf{Z}_h =
\exp\left(\sum_{k=1}^\infty \sum_{n\in \mathbb{Z}} \mathsf{H}_{n,k\alpha} \ y^n v^{k\alpha}
\right)\ .$$
For fixed $k$, the motivic invariant $\mathsf{H}_{n,k\alpha}$ vanishes
for sufficiently negative $n$. 
Assuming Conjectures A and B, we rewrite the partition
function as
$$\mathsf{Z}_h =
\exp\left(\sum_{k=1}^\infty \sum_{n\in \mathbb{Z}} \mathsf{H}_{n,k,h[k]} \ y^n v^{k}
\right)\ $$
where we define
$$2h[k]-2 = \langle k \alpha, k\alpha\rangle = k^2(2h-2), \ \ \ \
h[k]= k^2(h-1)+1\ .$$
The variable $v^\alpha$ has now been replaced by just $v$.

\vspace{15pt}
\noindent {\bf Conjecture C.} {\em For all $h$,
the partition function 
$$\mathsf{Z}_h =
\exp\left(\sum_{k=1}^\infty \sum_{n\in\mathbb{Z}} \mathsf{H}_{n,k,h[k]} \ y^n v^{k}
\right)\ $$
equals the product}
$$\prod_{k=1}^\infty \, \prod_{j_L,j_R,m_L,m_R,m,j}\,
%\prod_{m_L=-j_L}^{j_L}
%\prod_{m_R=-j_R}^{j_R}
%\prod_{m=1}^\infty
%\prod_{j=0}^{m-1}
\big(1+u^{-m+1+2j-2m_R}\, y^{m-2m_L}\, v^{k}\big)^{(-1)^{2(j_L+j_R)}
\mathsf{R}^{h[k]}_{j_L,j_R}} \ .
$$

\vspace{15pt}

The product $\prod_{j_L,j_R,m_L,m_R,m,j}$ appearing in Conjecture C
is just as before \eqref{george}. Conjecture C determines every 
$\mathsf{H}_{n,k,h[k]}$
in terms of the refined Gopakumar-Vafa invariants
obtained from primitive class geometry.
Such a relation may be viewed as a divisibility invariance
property.

If we substitute $u=-1$ in $\mathsf{H}_{n,k\alpha}$,
we recover the stable pairs invariants
$R_{n,k\alpha}$ defined in \cite{PT2} for $K3$ surfaces.
An unwinding of the definitions then shows Conjecture
C implies the KKV conjecture (proven in \cite{PT2}) for stable
pairs invariants in all classes.

%%%%%%%%%%%%%%%%%%%%%%%%%%%%%%%%%%%%%%%%%%%%%%%%%%%%%%%%%%%%%

\section{First predictions}

\subsection{Virtual motives} \label{vm}
Before presenting examples, we 
 quickly review the theory of virtual motives following \cite{BJM,J}.

Joyce and collaborators introduce the notion of an {\em oriented d-critical locus\/} as a framework for
defining motivic invariants within classical (non-derived) algebraic geometry.
Moduli spaces of stable pairs carry such a structure.  
We review the aspects which are
most relevant for us and refer the reader to \cite{J} for 
the omitted details.

A {\em d-critical locus} is a variety $M$ which can locally be realized as 
$\mathrm{Crit}(f)$ for a holomorphic $f$ on a smooth space $U$ (a {\em critical
chart\/}),
with a weak notion of compatibility among the critical charts.  
The compatibility is strong enough however to
define  a {\em virtual canonical bundle\/} 
\begin{equation*}
K_M^{\mathrm{vir}}\in\mathrm{Pic}(M^\mathrm{red}).
\label{kvir}
\end{equation*}
\noindent Given a critical chart $(U,f)$, there is a canonical isomorphism
\begin{equation}
K_M^\mathrm{vir}|_{\mathrm{Crit}(f)}\simeq K_U^{\otimes2}|_{\mathrm{Crit}(f)}.
\label{canonisom}
\end{equation}  
An {\em orientation\/} is a choice of square root of the virtual canonical bundle
$$(K_M^{\mathrm{vir}})^{1/2}\in\mathrm{Pic}(M^{\mathrm{red}})\, 
 .$$  

From the data of a oriented $d$-critical locus,
a virtual motive $[M]^{\mathrm{vir}}$ can be defined.  There are two
ingredients:
\begin{itemize}
\item The motivic vanishing cycle of Denef and Loeser \cite{DL},
\item A principal $\bZ_2$ bundle determined by the choice of orientation.
\end{itemize}
A local virtual motive can be associated to the motivic vanishing cycle. After
a motivic twist by the principal $\bZ_2$ bundle, Joyce and collaborators
\cite{J} show  the local
motives glue together.

We review the motivic vanishing cycle  following \cite{DL}. 
For our examples, we will only require
the case where $f$ is of the form
\[
f=\prod_{i=1}^nz_i^{n_i}.
\]
\noindent Here $(z_1,\ldots,z_n)$ are coordinates in a neighborhood $U$ of the origin
in $\bC^n$.  Put $U_0=f^{-1}(0)$.

For an index set $I\subset\{1,\ldots,n\}$ define $E_I$ by the equations $z_i=0$
for all $i\in I$, and define 
$$E_I^\circ=E_I-\cup_{j\not\in I}E_j\, .$$
Let $m_I=\mathrm{gcd}\{n_i\mid i\in I\}$ and define $\widetilde{E}_I\to E_I^\circ$ by
\begin{equation*}
\widetilde{E}_I=\left\{(z,w)\in E_I^\circ\times\bC\mid w^{m_I}=\prod_{j\not\in I}
z_j^{n_j}
\right\}
\end{equation*}
\noindent with a natural projection to $E_I^\circ$.

The group $\mu_{m_I}$ of roots of unity acts on $\widetilde{E}_I$ by 
its action on $w$. In fact,
$\widetilde{E}_I$ is a Galois $\mu_{m_I}$-cover of $E_I^\circ$.
Denoting the action by $\rho_I$, we obtain
 an element $[\widetilde{E}_I,\rho_{I}]$ in
the ring of equivariant motives over $U$.  The motivic nearby
cycle of $f$ is
\begin{equation}\label{chad}
\mathrm{MF}_{U,f}^\mathrm{mot}=\sum_{I\neq \emptyset}\left(1-\bL
\right)^{|I|-1}[\widetilde{E}_I,\rho_{I}],
\end{equation}
where $\bL=[\bA^1]$ as before.

The {\em motivic vanishing cycle\/} of $f$ is
\begin{equation*}
\mathrm{MF}_{U,f}^{\mathrm{mot},\phi}=\bL^{-\dim U/2}\left([U_0]-
\mathrm{MF}_{U,f}^\mathrm{mot}
\right)
\end{equation*}
We only need the motivic vanishing cycle in three special cases:
\begin{itemize}
\item $f=0$
\item $f=z_1^2z_2^2$
\item $f=z_1^2z_2$
\end{itemize}

In case $f=0$, we have $X=U_0=U$ is nonsingular, $\mathrm{MF}_{U,f}^{\mathrm{mot}}$
is empty, and so the motivic vanishing cycle is $\bL^{-\dim U/2}[U]$.
Here, we match the conventions of Section \ref{con2}.

In case $f=z_1^2z_2^2$, for every nonempty $I\subset \{1,2\}$, we have
$m_I=2$ and $\widetilde{E}_I$ is a disconnected
double cover of $E_I^\circ$.  So each summand
in the motivic nearby cycle \eqref{chad} is the product of the
respective $E_I^\circ$ with the absolute equivariant motive
\begin{equation*}
[\mu_2,\rho]
\end{equation*}
\noindent where $\rho$ denotes the action of $\mu_2$ on itself.
%In the ring of motives, it is well known that 
%\[
%\left(1-[\mu_2,\rho]
%\right)^2=\bL,
%\]
%\noindent so we follow convention and write
%\begin{equation}
%\bL^{1/2}=1-[\mu_2,\rho].
%\label{l12}
%\end{equation}
We therefore obtain
$$
\mathrm{MF}_{U,f}^{\mathrm{mot}}=
 [\mu_2,\rho]\left(
[E_1^\circ]+[E_2^\circ]+\left(1-\bL\right)[E_{12}^\circ]
\right).
$$
Using \eqref{jq}, we find the following expression:
\begin{multline*}
\mathrm{MF}_{U,f}^{\mathrm{mot},\phi}= \\
\bL^{-\dim U/2}
\left(\bL^{1/2}\left([E_1^\circ]+[E_2^\circ]\right)+
\left(1-\left(1-\bL^{1/2}\right)\left(1-\bL\right)
\right)E_{12}^\circ
\right)
\end{multline*}
\noindent which simplifies to
\begin{equation}
\bL^{-\dim M /2}\left([M]+[E_{12}^\circ]\left(\bL^{1/2}-\bL\right)\right)\ .
\label{locx2y2vir}
\end{equation}

In case $f=z_1^2z_2$, we have $m_1=2$ and $m_I=1$ otherwise.  Thus, 
\[
\mathrm{MF}^{\mathrm{mot}}_{U,f}=[\widetilde{E}_1,\rho_1]+[E_2^{\circ}]+\left(1-\bL\right)
[E_{12}^\circ]
\]
and 
\begin{equation}
\mathrm{MF}^{\mathrm{mot},\phi}_{U,f}=\bL^{-\dim U/2}\left([E_1^\circ]-[\widetilde{E}_1,\rho_1]
+\bL [E_{12}^\circ].
\right)
\label{locx2yvir}
\end{equation}
As expected, $E_2^\circ$ has cancelled out ($E_2^\circ$ is not
part of $\mathrm{Crit}(f)$).

The principal $\bZ_2$ bundle associated with a choice of orientation is given by
the local isomorphisms
\[
\left(K_M^{\mathrm{vir}}\right)^{1/2}|_{\mathrm{Crit}(f)}\simeq K_U|_{\mathrm{Crit}(f)}
\]
\noindent which are square roots of the canonical isomorphism (\ref{canonisom}).

In the case $f=0$ or $f=z_1^2z_2^2$, the principal 
bundle $\bZ_2$ bundle is trivial in a punctured
neighborhood of each $E_I^\circ$, essentially since there is no
ramification in the Galois $\mu_2$ covers described above.  As we shall see,
the principal $\bZ_2$ bundle plays an important role in the case 
$f=z_1^2z_2$.

Next we globalize the $f=0$ geometry. Suppose $M$ is nonsingular.  
As a $d$-critical locus, $M$ can be described
by a single critical chart $(M,0)$, so 
$$K_M^{\mathrm{vir}}\cong K_M^{\otimes2}\ .$$  For 
the natural choice of orientation $K_M$, we have
\begin{equation*}
[M]^\mathrm{vir}=\bL^{-\dim M /2}[M],
\end{equation*}
 If there is no 2-torsion in
$\mathrm{Pic}(M)$, then $K_M$ is the only possible orientation, as
will be the case in our example.

In a second geometry which will arise, 
$M^{\mathrm{red}}$ is a union of two
nonsingular components $E_1,E_2$
meeting transversally along a  nonsingular irreducible divisor 
$E_{12}$.  In such case, $M$ {\em must} be nonreduced.  
We will also have $E_1-E_2$
and $E_2-E_1$ nonsingular, so nilpotents can occur only along $E_{12}$.
We {\em assume\/} the simplest possible 
scheme structure compatible with the situation: $(z_1^2z_2,z_1z_2^2)$  in the neighborhood of any point of any $E_{ij}$, where $z_1=0$
and $z_2=0$ are local equations for $E_1$ and $E_2$ respectively.  
In other words, we take 
\[
f=z_1^2z_2^2
\]
as the superpotential.

Consider the natural isomorphism
\[
K_M^{\mathrm{vir}}|_{E_1^\circ}\cong K_{E_1^\circ}^{\otimes2}
\]
These bundles extend to respective line bundles
$K_M^{\mathrm{vir}}|_{E_1}$ and $K_{E_1}^{\otimes2}$ on $E_1$.
Direct computation shows the isomorphism vanishes to order~2 along $E_{12}$.
So
\begin{equation*}
K_M^{\mathrm{vir}}|_{E_1}\cong K_{E_1}^{\otimes2}\left(-2E_{12},
\right)
\end{equation*}
\noindent with the analogous identification on $E_2$.

So there is again a natural orientation $(K_M^{\mathrm{vir}})^{1/2}$ determined by
\begin{equation*}
\left(K_M^{\mathrm{vir}}\right)^{1/2}|_{E_i}\cong K_{E_i}\left(-E_{12}
\right)
\end{equation*}
\noindent for $i=1,2$.  If in addition there is no 2-torsion in $\mathrm{Pic}(E_1)$
or $\mathrm{Pic}(E_2)$, then the orientation is unique.  
Such uniqueness will occur in our example.

With the unique orientation, we have a globalization of (\ref{locx2y2vir}),
\begin{equation}
[M]^{\mathrm{vir}}=\bL^{-\dim M/2}\big([M]+[E_{12}]\left(\bL^{1/2}-\bL\right)\big).
\label{x2y2vir}
\end{equation}

A third geometry will arise:  $M^\mathrm{red}$ 
is irreducible,
nonsingular, and contains a nonsingular divisor $D\subset M$ 
{\em precisely\/} along which  $M$ is nonreduced.  We {\em assume\/} the simplest possible
scheme structure compatible with the situation, $(z_1^2,z_1z_2)$ in the 
neighborhood of any point of $D$, where $z_1=0$ is a local equation for
$M^{\mathrm{red}}$ and $z_1=z_2=0$ are local equations for $D$.  In other
words, we take
\[
f=z_1^2z_2
\]
as the superpotential,{\footnote{The superpotential
$z_1^2z_2$ together with the associated $\mathbb{Z}_2$-monodromy
was first analyzed in \cite[Example 4.5]{Sz} to calculate nontrivial
refined stable
pairs invariants of local $\proj^1$.}} locally identifying $E_1$ with $M^{\mathrm{red}}$ and
$E_{12}$ with $D$.

Consider the natural isomorphism
\[
K_M^{\mathrm{vir}}|_{M^{\mathrm{red}}-D}\cong K_{M^{\mathrm{red}}-D}^{\otimes2}
\]
These bundles extend to respective line bundles
$K_M^{\mathrm{vir}}$ and $K_{M^{\mathrm{red}}}^{\otimes2}$ on $M^{\mathrm{red}}$.
Direct computation shows the isomorphism vanishes to order~1 along $D$.
So
\begin{equation}
K_M^{\mathrm{vir}}\cong K_{M^{\mathrm{red}}}^{\otimes2}\left(-D
\right).
\label{kvirx2y}
\end{equation}
It is apparent that there is no natural orientation 
$(K_M^{\mathrm{vir}})^{1/2}$ in this general
situation.

For the moduli space of stable pairs, 
we know that $M$ is an oriented $d$-critical locus by general theory.
Hence, we
conclude $D$ must be  even:
$$\OO_{M^{\mathrm{red}}}(D)\simeq L^{\otimes2}$$ for some line bundle $L$ on 
$M^\mathrm{red}$.\footnote{The same calculation was applied in 
\cite[Example~2.39]{J} to the situation $$(M^{\mathrm{red}},D)
=(\bP^1,p)$$  to show that a certain $d$-critical locus was {\em not\/}
orientable since the class of a point is not even in $\mathrm{Pic}(\bP^1)$.}  Then, we have an orientation
\[
\left(K_M^{\mathrm{vir}}\right)^{1/2}=K_{M^{\mathrm{red}}}\otimes L^{-1}\, ,
\]
which is the only possibility if $\text{Pic}(M)$
has no 2-torsion.

In the above oriented situation, the principal $\bZ_2$ bundle of square roots of
(\ref{canonisom}) in a critical chart naturally ramifies when extended to $D$.
Let $$\pi:\widetilde{M}\to M$$ be the double cover of $M$
branched along $D$, with the natural involution $\iota$ and ramification
divisor $\tilde{D}$.  The principal
$\bZ_2$ bundle modifies the local virtual motive 
$\bL^{-\dim M/2}[M-D]$ of $M-D$ to\footnote{In \cite{BJM}, the $\bZ_2$ twists
are only defined in a quotient of the equivariant motivic ring.  We presume 
the computation holds in the equivariant motivic ring itself if
other approaches to virtual motives are followed \cite{KS}.}  
%In any
%case, the virtual Poincar\'e polynomial is not affected by this issue.}
\[
\bL^{-(\dim M+1)/2}\left([M-D]-[\widetilde{M}-\widetilde{D},\iota]
\right).
\]
Comparison with (\ref{locx2yvir}) shows how to extend the motive 
globally.  The result is
\begin{equation}
\bL^{-(\dim M+1)/2}\left([M-D]-[\widetilde{M}-\widetilde{D},\iota]+\bL[D]
\right)
\label{mlocx2yvir}
\end{equation}

We now compute the virtual Poincar\'e polynomial of (\ref{mlocx2yvir}).
To convert a $\bZ_2$-equivariant motive $[V,\iota]$ to a 
virtual Poincar\'e polynomial, 
decompose $H^*_{\mathrm{c}}(V)$ into its even and odd parts under $\iota^*$:
\[
H^*_{\mathrm{c}}(V)=H^+_{\mathrm{c}}(V)\oplus H^-_{\mathrm{c}}(V)
\]
\noindent and then take the virtual Poincar\'e polynomial, which we write 
as
\[
\widetilde{\mathsf{H}}\left(V\right)=\widetilde{\mathsf{H}}^+_{\mathrm{c}}(V)+
\widetilde{\mathsf{H}}^-_{\mathrm{c}}(V)
\]

In our conventions the virtual Poincar\'e
polynomial of $[V,\iota]$ is then 
\begin{equation}
\widetilde{\mathsf{H}}(V,\iota)=
\widetilde{\mathsf{H}}^+_{\mathrm{c}}(V)-u
\widetilde{\mathsf{H}}^-_{\mathrm{c}}(V)\ ,
\label{mu2pp}
\end{equation}
see \cite[Corollary 7.2]{L}. 
The minus sign in (\ref{mu2pp}) 
is consistent with the evaluation
$$\widetilde{\mathsf{H}}(\bL^{1/2})= \widetilde{\mathsf{H}}
\big(1-[\mu_2,\rho]\big)=u\,
. $$
%with $\bL^{1/2}$ defined by (\ref{l12}).
%It  is easier to state the result in terms of the virtual
%Poincar\'e polynomial.  

Finally, the virtual Poincar\'e polynomial of (\ref{mlocx2yvir}) is
\begin{equation}
u^{-(\dim M+1)}\left(u\widetilde{\mathsf{H}}_{\mathrm{c}}^-(\widetilde{M})+u^2\widetilde{\mathsf{H}}_c(D)
\right).
\label{x2ypp}
\end{equation}
%\noindent Putting $H^-(\tilde{M})=u^{-\dim M}\widetilde{H^-}(\tilde{M})$, this
%expression simplifies to\footnote{The computation is facilitated by \cite[Cor.~7.2]{L}.}

%\noindent and the local virtual motives now glue to
%\begin{equation}
%\bL^{-\dim M/2}\left([M^{\mathrm{red}}]+\left(\bL-1\right)[D]\right).
%\label{virmotx2y}
%\end{equation}

\subsection{Poincar\'e polynomials}
Some elucidation of (\ref{mu2pp}) is in order here.  Let $K^{\widehat{\mu}}_0(\mathrm{HS})$ denote the Grothendieck ring of 
the category of Hodge structures with a $\widehat{\mu}$-action.  
There is a Hodge
characteristic map \cite{L},
\begin{equation*}
  \chi_{\mathrm{h}}:K^{\widehat{\mu}}_{\mathrm{var}}\to K^{\widehat{\mu}}_0(\mathrm{HS})\, ,
\end{equation*}
which can be extended to
\begin{equation}
  \chi_\mathrm{h}:K^{\widehat{\mu}}_{\mathrm{var}}[\bL^{-1}]\to K^{\widehat{\mu}}_0(\mathrm{HS})
\end{equation}
since $\chi_{\mathrm{h}}(\bL)$ is invertible.  We will define a {\em virtual
Poincar\'e
polynomial} map
\begin{equation*}
 \mathsf{P}:K^{\widehat{\mu}}_0(\mathrm{HS})\to \ZZ[u,u^{-1}]
\end{equation*}
which then determines a virtual Poincar\'e 
polynomial{\footnote{Equivalent definitions have appeared before, see 
\cite[Appendix A.4]{SB} and the references there.}} 
map 
\begin{equation*}
\widetilde{\mathsf{H}}=\mathsf{P}\circ \chi_h:K^{\widehat{\mu}}_{\mathrm{var}}[\LL^{-1}]\to \ZZ[u,u^{-1}]\, .  
\end{equation*}
The definition of $\mathsf{P}$ is chosen so that
$\widetilde{\mathsf{H}}$ is a ring homomorphism.

For simplicity of exposition, we focus on the special case of 
$\mu_2$-equivariant Hodge structures (the only case which appears in 
the examples considered here) and say a few words about how to extend to
the general case.

By \cite[Cor.~7.2]{L}, the motivic convolution product in 
$K^{{\widehat{\mu}}}_{\mathrm{var}}$ descends
to a product in $K^{{\widehat{\mu}}}_0(\mathrm{HS})$, denoted $*$.  Restricting
to elements of $K^{{\mu_2}}_0(\mathrm{HS})$, the product is shown to  satisfy
\begin{multline}
  H*H'=
\\ H^+\otimes (H')^+ + H^+\otimes (H')^-+H^-\otimes (H')^++H^-\otimes (H')^-(-1),
\label{mu2conv}
\end{multline}
where $H^\pm$ are the even and odd parts of the $\mu_2$-action on $H$ (and 
similarly for $H'$). 

Forgetting the $\mu_2$ action, we let $\mathsf{Q}(H)$ be the Poincar\'e polynomial
of a Hodge structure $H$. 
Equation (\ref{mu2pp}) can be rephrased as defining $\mathsf{P}$ to be 
\begin{equation*}
  \mathsf{P}(H)=\mathsf{Q}(H^+)-u\mathsf{Q}(H^-).
\end{equation*}
Then (\ref{mu2conv}) implies $\mathsf{P}(H*H')$ is 
\begin{equation*}
\mathsf{Q}(H^+\otimes (H')^+) -u \mathsf{Q}(H^+\otimes (H')^-)-u(H^-\otimes (H')^+)
+u^2\mathsf{Q}(H^-\otimes (H'))
\end{equation*}
which equals $\mathsf{P}(H)\mathsf{P}(H')$ as desired.

The full
result \cite[Cor.~7.2]{L} extends (\ref{mu2conv}) to a formula for 
$*$ valid for any $$H,H'\in K^{{\widehat{\mu}}}_0(\mathrm{HS})\, ,$$
 expressed in terms
of characters of $\widehat{\mu}$.  Then $\mathsf{P}(H)$ 
can be defined by $\mathsf{Q}(H)$
for $H$ a trivial representation of $\widehat{\mu}$ and $-u\mathsf{Q}(H)$ if $H$
transforms by a nontrivial character of $\widehat{\mu}$.  The verification
of $$\mathsf{P}(H*H')=\mathsf{P}(H)\mathsf{P}(H')$$ in the general case is a bit
more involved, relying on the precise form
of \cite[Cor.~7.2]{L} and of the computation of the characters of the
cohomology of the Fermat curves used in the definition of motivic
convolution \cite{SK}.  The result from \cite{SK} also appears as
\cite[Lemma~7.1]{L}.

\subsection{Elliptically fibered K3 surfaces}
Let $S$ be an elliptically fibered $K3$ surface, 
$$\pi: S \rightarrow \proj^1\ ,$$
with section $s$ and fiber class
$f$.  No singular point of any fiber lies on 
$s$. 
  We will compute motivic stable pair invariants in classes
$$s,f,s+f\in \text{Pic}(S)$$ with small Euler characteristic following the method
of \cite{J}.
For our definition in Section \ref{spm}, we will consider various
families 
$$\epsilon: T \rightarrow (\Delta,0)$$
depending upon the class.

We start with the fiber class $f\in \text{Pic}(S)$.  If $(F,\tau)$ is a stable pair of class
$[F]=f$ and
$\chi(F)=0$, then $F={\mathcal{O}}_E$ for some fiber $E$ of 
$\pi$. Since the fibers are parametrized by $\bP^1$,
\[
P_0(S,f)\simeq\bP^1
\]
\noindent and
\[
[P_0(S,f)]^{{\rm vir}}=\bL^{-1/2}[\bP^1].
\]
Note  $\mathrm{Pic}(\bP^1)$ has no torsion, so there is no choice in the
motivic invariant.

If $\chi(F)=1$, then the $\mathrm{coker}(\tau)$ of the stable pair $(F,\tau)$ 
is a point, and 
$P_1(S,f)\cong S$, so
\[
[P_1(S,f)]^{{\rm vir}}=\bL^{-1}[S].
\]
Again, $\mathrm{Pic}(S)$ has no torsion, so there is no choice in the
motivic invariant.

%Similarly, $P_2(S,f)\simeq S^{[2]}$ the relative Hilbert scheme of
%length~2 for $S/\bP^1$.  I haven't checked if $S^{[2]}$ is smooth or not.
%For simplicity, let's proceed as if it is (I can get around this assumption) so that 
%\[
%[P_2(S,f)]^{{\rm vir}}=\bL^{-3/2}[S^{[2]}]
%\]
%\[
%Z^{\rm{mot}}=1+Q^f\left(\bL^{-1/2}[\bP^1]+\bL^{-1}[S]q+\bL^{-3/2}[S^{[2]}]q^2+\ldots
%\right)+\ldots
%\]

Next consider the section $s\in \text{Pic}(S)$.  
We see $P_1(S,s)$ is a point and 
$$P_2(S,s)\cong\bP^1\ .$$ Hence,
the coefficient of $v^s$ in the motivic partition function 
$\mathsf{Z}^{\text mot}$ is
\[
q+\bL^{-1/2}[\bP^1]q^2+\ldots \ .
\]
Since $s,f\in \text{Pic}(S)$ are irreducible classes, for any 1-rigid family $\epsilon$,
the above moduli identifications are valid on $T$.

The class $s+f$ is primitive but {\em not}
irreducible. In the Euler characteristic 0 case,  
$$P_0(S,s+f)\cong \bP^1\, $$ is
determined by the location of a fiber of $\pi$. Hence,
\[
[P_0(S,s+f)]^{\mathrm{vir}}=\bL^{-1/2}[\bP^1]\ .
\]
Again, $T$ plays no interesting role.

The more interesting geometry occurs in Euler characteristic 1.
The moduli space
$P_1(T,s+f)$ has two components, $E_1$ where the point is on the fiber and $E_2$
where the point is on $s$:
\begin{itemize}
\item $E_1\cong S$
\item $E_2\cong \bP^1\times\bP^1$
\end{itemize}
\noindent For $E_2$, the first
$\bP^1$ parametrizes the point of the fiber and the second
$\bP^1$ parametrizes the location of the point $\mathrm{coker}(\tau)$ 
on the section.
The two components meet along 
$$E_1\cap E_2\cong \bP^1$$ 
embedded in $S$ as the section $s$
and in $\bP^1\times\bP^1$ as the diagonal.  

We assume the local superpotential is the second 
form discussed in Section \ref{vm}.
By (\ref{x2y2vir}), we have
\[
[P_1(X,s+f)]^{\mathrm{vir}}=\bL^{-1}\left([P_1(X,s+f)]+[\bP^1](\bL^{1/2}-\bL )
\right)
\]

For the classes $s,f,s+f\in \text{Pic}(S)$ to order $q$, the motivic partition function is 
\begin{eqnarray*}
\mathsf{Z}^{\text{mot}}&=&1
+v^s\left(q+\ldots
\right)
\\ & & \ \ +\ v^f\left(\bL^{-1/2}[\bP^1]+q\bL^{-1}[S]%+\bL^{-3/2}[S^{[2]}]q^2
+\ldots
\right)  \\ & & \ \ +\ 
v^{s+f}\Big(\bL^{-1/2}[\bP^1]+
q\bL^{-1}\big([P_1(X,s+f)]+[\bP^1](\bL^{1/2}-\bL)\big) + \ldots
\Big)
\\ & & \ \ + \ldots\ .
\end{eqnarray*}
We now calculate the coefficient of $v^{s+f}$ in
$\log(Z^{\text{mot}})$. The $q^0$ coefficient is simply $\bL^{-1/2}[\bP^1]$.
The $q$ coefficient is
\begin{multline*}
\bL^{-1}\Big([P_1(X,s+f)]+[\bP^1]\left(\bL^{1/2}-\bL
\right)\Big)-\bL^{-1/2}[\bP^1]\\ =
\bL^{-1}\Big([S]+[\bP^1\times\bP^1]-[\bP^1]+[\bP^1]\left(\bL^{1/2}-\bL
\right)\Big)-\bL^{-1/2}[\bP^1]\\ =
\bL^{-1}[S]+\bL^{-1}[\bP^1]\big([\bP^1]-1+\bL^{1/2}-\bL\big)-\bL^{-1/2}
[\bP^1]\ .
\end{multline*}
The last expression is easily simplified to 
$\bL^{-1}[S]$.
We conclude the $q^0$ and $q^1$ coefficients of $v^{s+f}$
in $\log(Z^{\text{mot}})$ agree with the 
 $q^0$ and $q^1$ coefficients of $v^f$ in 
$\log(Z^{\text{mot}})$.

The above calculation provides nontrivial evidence for Conjectures
A and B. In fact, if more naive approaches to the motivic theory
are taken (for example using the actual moduli spaces or
even the Behrend function on the moduli spaces), the agreement
we have found fails.
We have verified the prediction to order $q^2$, but we do not
include the more involved calculations here. 

We next turn to the class $2f$.  We have 
$$P_0(S,2f)\cong\mathrm{Sym}^2(\bP^1)
\cong\bP^2\ .$$  However, in the algebraic twistor family, 
the scheme structure is {\em not}  reduced{\footnote{We thank R. Thomas for
the verification.}}  precisely along the diagonal
curve $D\subset \bP^2$, a plane conic.  The discussion of Section~\ref{vm} therefore
applies.  The virtual canonical bundle is $$K_{\bP^2}^{\otimes2}(-D)\cong\OO_{\bP^2}(-8)\, ,$$ and
 the algebraic twistor family is uniquely oriented by $\OO_{\bP^2}(-4)$.

The double cover of $\bP^2$ branched along $D$ is a nonsingular quadric surface $Q$
containing an isomorphic copy $\widetilde{D}$ of $D$.  The motive of 
$D\cong\widetilde{D}$ is $\bL+1$, so
the motive of  $Q-\widetilde{D}$ is $\bL^2+\bL$ and the motive of 
$\bP^2-D$ is $\bL^2$.
Therefore the odd part of the motive of $Q-\widetilde{D}$ is $\bL$.  Hence \eqref{x2ypp} yields
\[
\mathsf{H}_{0,2,0[2]}=\widetilde{\mathsf{H}}\left([P_0(X,2f)]^{\mathrm{vir}}\right)=1+\left(u+u^{-1}\right),
\]
\noindent in complete agreement with Conjecture~C. 

%For $P_2(S,f+s)$, there are three components
%\begin{itemize}
%\item $E_1\simeq S^{[2]}$ (two points on a fiber)
%\item $E_2\simeq (S\times\bP^1)^\sim$ (point on a fiber and point on section
%\item $E_3\simeq (\bP^1\times\bP^2)^\sim$ (two points on section)
%\end{itemize}
%where $(S\times\bP^1)^\sim$ is the blowup of $S\times\bP^1$ along  $\bP^1$, embedded in
%$S$ as the section (there is a $\bP^1$ worth of length 2 subschemes of $f+s$ supported at
%the intersection point).  $\bP^2$ is $(\bP^1)^{[2]}$ and $(\bP^1\times\bP^2)^\sim$ is the blowup
%of $\bP^1\times\bP^2$ along $\bP^1$, where $\bP^1$ is embedded in $\bP^2$ as the locus of
%length~2 subschemes.
%
%The intersections are $E_{12}\simeq S$, corresponding to one point on the fiber and the
%point $f\cap s$, and $E_{23}\simeq \bP^1\times\bP^1$.
%
%\[
%[P_2(S,f+s)]^{\mathrm{vir}}=\bL^{-3/2}\left([P_2(S,f+s)]+([S]+[\bP^1\times\bP^1](\bL^{1/2}-\bL)
%\right)
%\]
% 

  %%%%%%%%%%%%%%%%%%%%%%%%%%%%%%%%%%%%%%%%%%%%%%%%%%%%%%%%%%%%%
\section{Duality and 
Noether-Lefschetz theory} 
\label{B-model}
\subsection{Heterotic-Type II duality}
The Yau-Zaslow conjecture originates in heterotic-Type 
II duality in 6d, where the heterotic string is 
compactified on the four torus $T^4$ and the 
Type II string on $K3$. By the adiabatic argument~\cite{Vafa:1995gm}, 
this can be extended to 4d $N=2$ supersymmetric theories, which are obtained 
from {\sl dual pairs} of heterotic string compactifications on 
$K3\times T^2$ and Type II string compactifications on Calabi-Yau 3-folds $X$. 
The latter are $K3$ fibations over $\mathbb{P}^1$.  This construction of 
dual pairs requires a match between the vector- and hypermultiplet moduli 
spaces of the heterotic and the type II compactifications. The heterotic 
moduli parametrize the metric of $K3\times T^2$, the bundle data of the heterotic
compactification, and the heterotic dilaton $S$, which is in a vector multiplet.  
In Type IIA compactifications, the complexified K\"ahler moduli $(t,S)$ of $X$ parametrize 
the vector multiplet moduli space\footnote{The 
complex moduli of $X$  together with the Ramond fields  and the type II 
dilaton parametrize the hypermultiplet moduli space, which is of 
quaternionic dimension $h^{21}(X)+1$. This makes the duality much richer, 
but we focus on the vector moduli.}, which is  of complex dimension $h^{11}(X)$. 
In particular, the heterotic dilaton 
$$S=\frac{4 \pi }{g^2_{het}}+ i \theta$$ is identified 
with the complexified volume of the base $\mathbb{P}^1$. 

The simplest example is the 
STU-model, see~\cite{KMPS} for review. Here, the Calabi-Yau 3-fold 
$X$
is an elliptic fibration over 
$\mathbb{P}^1\times \mathbb{P}^1$  and a $K3$ fibration over $\mathbb{P}^1$  
with 
$$h^{11}(X)=3\ .$$
The three vector moduli 
are identified on the heterotic side with the heterotic dilaton $S$, the 
complex modulus $T$,  and the complexified K\"ahler modulus $U$ 
of the heterotic torus $T^2$ \footnote{As $h^{21}(X)=243$,  the  
heterotic hypermultiplet moduli space is of quaternionic dimension $244$.}. 

An impressive consequence of the proposed duality is that a perturbative 
heterotic one-loop amplitude predicts  {\sl all higher} genus 
amplitudes 
$$F(\lambda,t)=\sum_{g} \lambda^{2g-2} F_g(t)$$  of $X$ in 
the infinite base limit (the dependence on the $K3$ fiber classes). 
For the STU model, 
%---------------
\begin{equation}
\label{het/II}
 \lim_{S\rightarrow \infty} F(\lambda,S,T,U )=F^{1-loop}_{het}(\lambda,T,U)\ . 
\end{equation}
%---------------
We will use this relation in the holomorphic limit to extend Conjecture C to 
a proposal for the refined invariants of the STU model.
% (and more generally of
%$K3$-fibered Calabi-Yau 3-folds).

\subsection{Refined Noether-Lefschetz theory}
\subsubsection{Overview}
We pass now  from the  string point of view to the more precise 
mathematical perspective advanced in \cite{gwnl,PT2} as Noether-Lefschetz 
correspondences. For our study of refined invariants, the Noether-Lefschetz
numbers of \cite{gwnl,PT2} also require refinement.

\subsubsection{$\Lambda$-polarization}
Following the notation of \cite[Section 1.1]{PT2},
let $\Lambda$ be a fixed rank $r$  
primitive{\footnote{A sublattice
is primitive if the quotient is torsion free.}}
sublattice
\begin{equation*} 
\Lambda \subset U\oplus U \oplus U \oplus E_8(-1) \oplus E_8(-1)
\end{equation*}
with signature $(1,r-1)$, and
let 
$v_1,\ldots, v_r \in \Lambda$ be an integral basis.
The discriminant is
$$\Delta(\Lambda) = (-1)^{r-1} \det
\begin{pmatrix}
\langle v_{1},v_{1}\rangle & \cdots & \langle v_{1},v_{r}\rangle  \\
\vdots & \ddots & \vdots \\
\langle v_{r},v_{1}\rangle & \cdots & \langle v_{r},v_{r}\rangle 
\end{pmatrix}\ .$$
The sign is chosen so $\Delta(\Lambda)>0$.

A {\em $\Lambda$-polarization} of a $K3$ surface $S$   
is a primitive embedding
$$j: \Lambda \rightarrow \mathrm{Pic}(S)$$ 
satisfying two properties:
\begin{enumerate}
\item[(i)] the lattice pairs 
$\Lambda \subset U^3\oplus E_8(-1)^2$ and
$\Lambda\subset 
H^2(S,\mathbb{Z})$ are isomorphic
 via an isometry which restricts to the identity on $\Lambda$,
 \item[(ii)]
$\text{Im}(j)$ contains
a {quasi-polarization}. 
\end{enumerate}
By (ii), every $\Lambda$-polarized $K3$ surface is algebraic.

The period domain $M$ of Hodge structures of type $(1,20,1)$ on the lattice 
$U^3 \oplus E_8(-1)^2$   is
an analytic open set
 of the 20-dimensional  nonsingular isotropic 
quadric $Q$,
$$M\subset Q\subset \proj\big(    (U^3 \oplus E_8(-1)^2 )    
\otimes_\Z \com\big).$$
Let $M_\Lambda\subset M$ be the locus of vectors orthogonal to 
the entire sublattice $\Lambda \subset U^3 \oplus E_8(-1)^2$.

Let $\Gamma$ be the isometry group of the lattice 
$U^3 \oplus E_8(-1)^2$, and let
 $$\Gamma_\Lambda \subset \Gamma$$ be the
subgroup  restricting to the identity on $\Lambda$.
By global Torelli,
the moduli space $\mathcal{M}_{\Lambda}$ 
of $\Lambda$-polarized $K3$ surfaces 
is the quotient
$$\mathcal{M}_\Lambda = M_\Lambda/\Gamma_\Lambda.$$
We refer the reader to \cite{dolga} for a detailed
discussion.

\subsubsection{Noether-Lefschetz divisors}

Let $(\mathbb{L}, \iota)$ be a rank $r+1$
lattice  $\mathbb{L}$
with an even symmetric bilinear form $\langle\, ,\rangle$ and a primitive embedding
$$\iota: \Lambda \rightarrow \mathbb{L}.$$
Two data sets 
$(\mathbb{L},\iota)$ and $(\mathbb{L}',  \iota')$
are isomorphic if and only if there exist an isometry relating $\mathbb{L}$
and $\mathbb{L}'$ 
which takes $\iota$ to $\iota'$.
The first invariant of the data $(\mathbb{L}, \iota)$ is
the discriminant
 $\Delta \in \mathbb{Z}$ of 
$\mathbb{L}$.

An additional invariant of $(\mathbb{L}, \iota)$ can be 
obtained by considering 
any vector $v\in \mathbb{L}$ for which{\footnote{Here, $\oplus$
is used just for the additive structure (not orthogonal
direct sum).}}
\begin{equation}\label{ccff} 
\mathbb{L} = \iota(\Lambda) \oplus \mathbb{Z}v.
\end{equation}
The pairing
$$\langle v,\cdot\rangle : \Lambda \rightarrow \mathbb{Z}$$
determines an element of $\delta_v\in \Lambda^*$.
Let 
$G = \Lambda^{*}/\Lambda$
be the quotient defined via the injection
$\Lambda \rightarrow \Lambda^*$
 obtained from the pairing $\langle\, ,\rangle$ on $\Lambda$.
The group $G$ is abelian of order given by  the 
discriminant $|\Delta(\Lambda)|$.
The image 
$$\delta \in G/\pm$$
of $\delta_v$ is easily seen to be independent of $v$ satisfying 
\eqref{ccff}. The invariant $\delta$ is the {\em coset} of $(\mathbb{L},\iota)$

By elementary arguments, two data sets $(\mathbb{L},\iota)$ and $(\mathbb{L}',\iota')$
of  rank $r+1$ are isomorphic if and only if the discriminants and cosets are
equal.

Let $v_1,\ldots, v_r$ be an integral basis of $\Lambda$ as before.
The pairing of $\mathbb{L}$ 
with respect to an extended basis $v_{1}, \dots, v_{r},v$
is encoded in the matrix
$$\mathbb{L}_{h,d_{1},\dots,d_{r}} = 
\begin{pmatrix}
\langle v_{1},v_{1}\rangle & \cdots & \langle v_{1},v_{r}\rangle & d_{1} \\
\vdots & \ddots & \vdots & \vdots\\
\langle v_{r},v_{1}\rangle & \cdots & \langle v_{r},v_{r}\rangle & d_{r}\\
d_{1} & \cdots & d_{r} & 2h-2
\end{pmatrix}.$$
The discriminant is
$$\Delta(h,d_{1},\dots,d_{r}) 
= (-1)^r\mathrm{det}(\mathbb{L}_{h,d_{1},\dots,d_{r}}).$$
The coset $\delta(h, d_{1},\dots,d_{r})$ is represented by the functional
$$v_i \mapsto d_i.$$

The Noether-Lefschetz divisor $P_{\Delta,\delta} \subset \mathcal{M}_{\Lambda}$
is the closure of the locus
 of $\Lambda$-polarized $K3$ surfaces $S$ for which
$(\mathrm{Pic}(S),j)$ has rank $r+1$, discriminant $\Delta$, and coset $\delta$.
By the Hodge index theorem{\footnote{The intersection
form on $\mathrm{Pic}(S)$ is
nondegenerate for an algebraic $K3$ surface.
Hence, a rank $r+1$ sublattice of $\mathrm{Pic}(S)$ which
contains a quasi-polarization must have signature $(1,r)$
by the Hodge index theorem.}}, $P_{\Delta,\delta}$ is empty unless $\Delta > 0.$  By definition, $P_{\Delta,\delta}$ is a reduced subscheme.

Let $h, d_{1}, \dots, d_{r}$ determine a positive discriminant
$$\Delta(h,d_{1},\dots,d_{r}) > 0.$$  The Noether-Lefschetz divisor
$D_{h, (d_{1},\dots,d_{r})}\subset \mathcal{M}_{\Lambda}$ is defined by 
the weighted sum
\begin{equation}\label{greff}
D_{h,(d_{1},\dots,d_{r})} 
= \sum_{\Delta,\delta} m(h,d_1,\dots,d_r|\Delta,\delta)\cdot[P_{\Delta,\delta}]
\end{equation}
where the multiplicity $m(h,d_1,\dots,d_r|\Delta,\delta)$ is the number
of elements $\beta$ of the lattice $(\mathbb{L},\iota)$ of 
type $(\Delta,\delta)$ satisfying
\begin{equation*}
\langle \beta, \beta \rangle = 2h-2,\ \  \langle \beta, v_{i}\rangle = d_{i}.
\end{equation*}
If the multiplicity is nonzero, 
then $\Delta | \Delta(h,d_{1},\dots,d_{r})$ so only 
finitely many divisors appear in the 
above sum.

\subsubsection{Refined Noether-Lefschetz numbers}
Let $X$ be a nonsingular projective Calabi-Yau 3-fold
fibered in $K3$ surfaces,
$$\pi: X \rightarrow \proj^1\ .$$
Let $L_1,\ldots,L_r\in \text{Pic}(X)$ determine 
a 1-parameter family of $\Lambda$-polarized $K3$ surfaces,
$$(X,L_1,\ldots,L_r,\pi)\ .$$
 The 1-parameter family determines a morphism
$$\iota:\proj^1\rightarrow \mathcal{M}_\Lambda\ .$$

The Noether-Lefschetz number $\mathsf{NL}^\pi_{h,d_1,\dots,d_r}$ is defined \cite{gwnl,PT2}
by the following conditions:
\begin{enumerate}
\item[$\bullet$] if $\Delta(h,d_{1},\dots,d_{r}) < 0$, then 
$\mathsf{NL}^\pi_{h,d_1,\dots,d_r}=0$,
\item[$\bullet$] if $\Delta(h,d_{1},\dots,d_{r}) = 0$, then 
$\mathsf{NL}^\pi_{h,d_1,\dots,d_r}= -2$,
\item[$\bullet$] if $\Delta(h,d_{1},\dots,d_{r}) > 0$, the
Noether-Lefschetz number is defined by the classical
intersection
product
\begin{equation*}\label{def11}
\mathsf{NL}^\pi_{h,(d_1,\dots,d_r)} =\int_{\proj^1} \iota_\pi^*[D_{h,(d_1,\dots,d_r)}].
\end{equation*}
\end{enumerate}

Our refinements of $\mathsf{NL}^\pi_{h,(d_1,\dots,d_r)}$
% $\mathsf{RNL}^\pi_{h,d_1,\dots,d_r}$ 
will not be numbers, but
rather representations{\footnote{As before, We denote the
irreducible representations of $SU(2)\times SU(2)$ by pairs
$[j_L,j_R]$ where $j_L,j_R\in \frac{1}{2} \mathbb{Z}_{\geq 0}$.}}
of $SU(2)\times SU(2)$
lying in the space
%$$\mathsf{RNL}^\pi_{h,d_1,\dots,d_r} \ \in\  
$$\mathbb{Z}_{\geq 0}\, [0,0]
\oplus \mathbb{Z}_{\geq 0}\, [0,\frac{1}{2}]\ .$$
The first refinement is defined by
$$\mathsf{RNL}^{\pi,\circ}_{h,d_1,\dots,d_r}= \mathsf{NL}^\pi_{h,d_1,\dots,d_r}\, [0,0]$$
and carries no more data than the Noether-Lefschetz number.

The definition of the second refinement $\mathsf{RNL}^{\pi,\diamond}_{h,d_1,\dots,d_r}$
is more subtle. Again, we consider
three cases based upon the
discriminant:
\begin{enumerate}
\item[$\bullet$] if $\Delta(h,d_{1},\dots,d_{r}) < 0$, then 
$\mathsf{RNL}^{\pi,\diamond}_{h,d_1,\dots,d_r}=0$,
\item[$\bullet$] if $\Delta(h,d_{1},\dots,d_{r}) = 0$, then 
$\mathsf{RNL}^{\pi,\diamond}_{h,d_1,\dots,d_r}= [0,\frac{1}{2}]$.
\end{enumerate}

If $\Delta(h,d_{1},\dots,d_{r}) > 0$, 
we divide the effective sum \eqref{greff} defining 
$D_{h,(d_1,\dots,d_r)}$ into two parts
$$D_{h,(d_1,\dots,d_r)} = S_\iota + T_\iota$$
where $S_\iota$ is the sum of the divisors on the right side of
 \eqref{greff} not containing
$\iota(\proj^1)$ and $T_\iota$ is the sum of such divisors
containing $\iota(\proj^1)$.
The final case of the definition is:
 \begin{enumerate}
\item[$\bullet$] if $\Delta(h,d_{1},\dots,d_{r}) > 0$, then 
$$\mathsf{RNL}^{\pi,\diamond}_{h,d_1,\dots,d_r}= 
\int_{\proj^1} \iota_\pi^* S_\iota\ \cdot [0,0]
-\frac{1}{2}\int_{\proj^1} \iota_\pi^* T_\iota\ \cdot [0,\frac{1}{2}] \ .$$ 
\end{enumerate}

The motivation of the second 
refinement is to record the geometric components of the
Noether-Lefschetz locus over the base $\proj^1$. Such loci here are unions
of points and lines --- the points correspond to the 
representation $[0,0]$ and the lines to the representation
$[0,\frac{1}{2}]$.

\subsection{Refined Pairs/Noether-Lefschetz correspondence}
We predict a refined Pairs/Noether-Lefschetz correspondence 
which intertwines
three theories associated to   
the 1-parameter family $$\pi:X \rightarrow \proj^1$$
 of $\Lambda$-polarized $K3$ surfaces
with Calabi-Yau total space:
\begin{enumerate}
\item[(i)] the refined Noether-Lefschetz theory  of $\pi$,
\item[(ii)] the refined Gopakumar-Vafa invariants of $X$ in fiber classes,
\item[(iii)] the refined  invariants $\mathsf{R}^h_{j_L,j_R}$
of the $K3$ fibers.
\end{enumerate}

Let $\mathsf{R}_{j_L,j_R,(d_1,\dots,d_r)}^X$ denote 
the refined Gopakumar-Vafa invariants of \cite{CKK} defined  via the stable
pairs moduli spaces of $X$
for $\pi$-vertical curve classes of degrees
%{\footnote{The
%invariant $n^X_{g,(d_1,\dots,d_r)}$ may be a (finite) sum of $n_{g,\gamma}^X$
%for $\pi$-vertical curve classes $\gamma \in H_2(X,\mathbb{Z})$.}} 
$d_1,\dots,d_r$
with respect to  line bundles $L_1,\dots, L_r$
corresponding to a basis of $\Lambda$. 
An $r$-tuple $(d_1,\ldots,d_r)$ is positive if the
associated degree with respect to a quasi-polarization 
$\lambda^\pi\in \Lambda$ is positive.

For our proposed R/NL refined correspondence, the 
$K3$ invariant $\mathsf{R}^h_{j_L,j_R}$ must be divided into two parts,
\begin{equation}\label{xdxd}
\mathsf{R}^h_{j_L,j_R}=
\mathsf{R}^{h,\circ}_{j_L,j_R} + 
\mathsf{R}^{h,\diamond}_{j_L,j_R}\ .
\end{equation}

\vspace{15pt}
\noindent {\bf Speculation [Refined P/NL correspondence].}
{\em A 1-parameter family of $\Lambda$-polarized $K3$ surfaces 
$$\pi: X \rightarrow \proj^1$$
with Calabi-Yau total space
 determines a division \eqref{xdxd} satisfying the following property.
For degrees $(d_1,\dots,d_r )$ positive with respect to the
quasi-polariza\-tion $\lambda^\pi$,
\begin{eqnarray*}
\sum_{j_L,j_R} \mathsf{N}_{j_L,j_R}^{X,(d_1,\ldots,d_r)}[j_L,j_R] & = & 
\sum_{j_L,j_R}
\sum_{h=0}^\infty 
\mathsf{R}_{j_L,j_R}^{h,\circ}[j_L,j_R] \otimes  
\mathsf{RNL}_{h,(d_1,\dots,d_r)}^{\pi,\circ}\ \\ & & +
\sum_{j_L,j_R}
\sum_{h=0}^\infty 
\mathsf{R}_{j_L,j_R}^{h,\diamond}[j_L,j_R] \otimes  
\mathsf{RNL}_{h,(d_1,\dots,d_r)}^{\pi,\diamond}\ .
\end{eqnarray*}}

\vspace{5pt}

By vanishing properties of the Noether-Lefschetz numbers, the above summations
over $h$ are finite for given $(d_1,\ldots, d_r)$.
We expect the counts
 $\mathsf{R}^X_{j_L,j_R,(d_1,\ldots,d_r)}$
to be invariant under deformations of 
$X$ {\em as a family of $\Lambda$-polarized $K3$ surfaces}.

\subsection{STU example}
For the STU model, we have a precise conjecture for the
division \eqref{xdxd} of $\mathsf{R}^h_{j_L,j_R}$ which is consistent
with several basic calculations.

We follow the STU conventions of \cite{KMPS} with the
lattice 
$$\Lambda=
\begin{pmatrix}
0 & 1 \\
1 & 0 
\end{pmatrix}\ .$$
The Noether-Lefschetz numbers for the STU model
$$\pi:X \rightarrow \proj^1$$
are determined in \cite{KMPS} to be
$$\mathsf{NL}^\pi_{h,(d_1,d_2)} = \text{Coeff}_{q^{1+d_1d_2-h}}
\Big(-2 E_4(q)E_6(q)\Big) \ $$
where $E_4$ and $E_6$ are the Eisenstein series,
$$-2 E_4(q)E_6(q) = -2 + 528 q+ 270864q^2 + 10393152q^3 + \ldots\ .$$ 
The refinement  is
easily seen to be given by
$$\mathsf{RNL}^{\pi,\diamond}_{h,(d_1,d_2)} = [0,\frac{1}{2}]$$
if $1+d_1d_2-h=0$  and
$$\mathsf{RNL}^{\pi,\diamond}_{h,(d_1,d_2)} = \mathsf{NL}^\pi_{h,(d_1,d_2)}\cdot 
[0,0]$$
otherwise.
The Betti number of $X$ are
$$u^{-3}[X]=  u^{-3} + 3 u^{-1} + 488 + 3 u + u^{3}\ .$$

We define $\mathsf{R}^{h,\diamond}_{j_L,j_R}$ for the STU model by a formula 
parallel to \eqref{DDD} but using only
part of the generating series of Hodge numbers
of the Hilbert schemes of points of $K3$ surfaces:
\begin{multline}\label{DDDD}
\sum_{h=0}^\infty \sum_{j_L}
\sum_{j_R} 
%(-1)^{2(j_L+j_R)}\mathsf{R}_{j_L,j_R}^h\ 
%[j_L]_{(-u)^2}[j_R]_{(-y)^2}
\mathsf{R}_{j_L,j_R}^{h,\diamond}\ 
[j_L]_{y}\, [j_R]_{u}
%\frac{(-u)^{-2j_L} - (-u)^{2j_L+2}}{1-u^2}
%\frac{(-y)^{-2j_R} - (-y)^{2j_R+2}}{1-q^2}
\, q^h= \\
\prod_{n=1}^\infty
 \frac{1}{
(1-u^{-1}y^{-1} q^n)(1-u^{-1}y q^n)(1-uy^{-1}q^n)
(1-u y q^n)} \,.
\end{multline}

\begin{table}[h]
\begin{center} {% \footnotesize 
\begin{tabular} {|c|c|} \hline ${{\mathsf{R}^{0,\diamond}_{\frac{i}{2},\frac{j}{2}}}_{}}$  & i=0 \\  \hline  j=0 & 1\\ \hline  \end{tabular}} 
\hspace{0.5cm}
\begin{tabular} {|c|cc|} \hline  ${\mathsf{R}^{1,\diamond}_{\frac{i}{2},\frac{j}{2}}}_{}$  & j=0 & 1 \\  
\hline  i=0 &   &  \\ 1  &  & 1 \\  \hline \end{tabular} 
\hspace{0.5cm}
\begin{tabular} {|c|ccc|} \hline 
 ${\mathsf{R}^{2,\diamond}_{\frac{i}{2},\frac{j}{2}}}_{}$   & j=0 & 1 & 2 \\  \hline i=0 & 1 &  &  \\  1 & 
 & 1 &  \\  2 &  &  & 1 \\  \hline \end{tabular} \hspace{0.0cm}
\end{center}
\end{table}
\vskip 0cm
\begin{table}[h]
\begin{center}
 \begin{tabular} {|c|cccc|} \hline  ${\mathsf{R}^{3,\diamond}_{\frac{i}{2},\frac{j}{2}}}_{}$  & j=0 & 1 & 2 & 3 \\  \hline i=0 & 1 &  & 1 &  \\  
1 &  & 2 &  &  \\  2 & 1 &  & 1 &  \\  3 &  &  & 
 & 1 \\  \hline \end{tabular} 
\hspace{0.5cm}
\begin{tabular} {|c|ccccc|} \hline  ${\mathsf{R}^{4,\diamond}_{\frac{i}{2},\frac{j}{2}}}_{}$ 
 &j=0 & 
1 & 2 & 3 & 4 \\  \hline i=0 & 3 &  & 1 &  &  \\  1 &  & 
3 &  & 1 &  \\  2 & 1 &  & 3 &  &  \\  3 &  & 1 &  
& 1 &  \\  4  &  &  &  &  & 1 \\  \hline 
\end{tabular}
% \begin{table}[h]
% \begin{center} 
% \begin{tabular} {|c|cccccc|} \hline $R^4_{\frac{i}{2},\frac{j}{2}}$
% & j=0 & 1 & 2 & 3 & 4 & 5 \\  \hline i=0 & 84777 
%&  & 253 &  &  &  \\  1 &  & 16171 &  & 22 &  &  \\  2 & 
% 253 &  & 2254 &  & 1 &  \\ 3 &  & 22 &  & 253 &  &  \\  
% 4 &  &  & 1 &  & 21 &  \\  5 &  &  &  &  &  & 1 \\  \hline \end{tabular} 
\vskip .4cm
\caption{All nonvanishing $\mathsf{R}^{h,\diamond}_{j_L,j_R}$ for $h\le 4$}
%\label{tableBettiK3}
\end{center}
\end{table}
\vskip -.7cm

%A complete list of nonvanishing
%$\mathsf{R}_{j_L,j_R}^{h,\diamond}$ for $h\leq 4$ is:
%$$\mathsf{R}^{0,\diamond}_{0,0}=1\, ,  $$
%$$\mathsf{R}^{1,\diamond}_{\frac{1}{2},\frac{1}{2}}=1\, , $$
%$$\mathsf{R}^{2,\diamond}_{0,0}=1\, , \ \
%\mathsf{R}^{2,\diamond}_{\frac{1}{2},\frac{1}{2}}=1\, , \ \
%\mathsf{R}^{2,\diamond}_{1,1} = 1\, .$$

%$$\mathsf{R}^{3,\diamond}_{0,0}=1\, , \ \
%\mathsf{R}^{3,\diamond}_{1,0}=1\, , \ \
%\mathsf{R}^{3,\diamond}_{\frac{1}{2},\frac{1}{2}}=2 \, , \ \
%\mathsf{R}^{3,\diamond}_{0,1}=1\, , \ \
%\mathsf{R}^{3,\diamond}_{1,1}=1\, , \ \
%\mathsf{R}^{3,\diamond}_{\frac{3}{2},\frac{3}{2}}=1\, .$$
%Finally, we present the nonvanishing 
%invariants for $h=4$:
%$$\mathsf{R}^{4,\diamond}_{0,0}=3\, , \ \
%\mathsf{R}^{4,\diamond}_{1,0}=1\, , \ \
%\mathsf{R}^{4,\diamond}_{\frac{1}{2},\frac{1}{2}}=3 \, , \ \
%\mathsf{R}^{4,\diamond}_{0,1}=1\, $$
%$$
%\mathsf{R}^{4,\diamond}_{\frac{1}{2},\frac{3}{2}}=1\, , \ \
%\mathsf{R}^{4,\diamond}_{1,1}=3\, , \ \
%\mathsf{R}^{4,\diamond}_{\frac{3}{2},\frac{1}{2}}=1\, , \ \
% $$ $$
%\mathsf{R}^{4,\diamond}_{\frac{3}{2},\frac{3}{2}}=1\, , \ \
%\mathsf{R}^{4,\diamond}_{2,2}=1\,  .$$

We expect $\mathsf{R}_{j_L,j_R}^{h,\diamond}$ to always be nonnegative
and bounded by $\mathsf{R}_{j_L,j_R}^{h}$.
Then,
$\mathsf{R}^{h,\circ}_{j_L,j_R}$ is uniquely defined
by equations \eqref{DDD}, \eqref{xdxd}, and \eqref{DDDD}.

\vspace{15pt}
\noindent {\bf Conjecture D.}
{\em  A refined $P/NL$ correspondence holds for 
fiber classes of the STU model:
\begin{eqnarray*}
\sum_{j_L,j_R} \mathsf{N}_{j_L,j_R}^{STU,(d_1,d_2)}[j_L,j_R] & = & 
\sum_{j_L,j_R}
\sum_{h=0}^\infty 
\mathsf{R}_{j_L,j_R}^{h,\circ}[j_L,j_R] \otimes  
\mathsf{RNL}_{h,(d_1,d_2)}^{\pi,\circ}\ \\ & & +
\sum_{j_L,j_R}
\sum_{h=0}^\infty 
\mathsf{R}_{j_L,j_R}^{h,\diamond}[j_L,j_R] \otimes  
\mathsf{RNL}_{h,(d_1,d_2)}^{\pi,\diamond}\ ,
\end{eqnarray*}
for degrees $(d_1,d_2)$ positive with respect to the
quasi-polariza\-tion.}
\vspace{10pt}

 Conjectures C and D together  predict
the refined invariants of the STU model in fiber classes. 
Let $(d_1,d_2)=(0,1)$ be the fiber class of the elliptic fibration
$$\mu: X \rightarrow \proj^1 \times \proj^1 \ .$$
For refined invariants of the STU model in class $(0,1)$,
the conjectures predict:
\begin{equation}  
\label{fddw} 488 { [0,0]}+ [\frac{1}{2},0] + [\frac{1}{2},1] \ .
%%\\[5mm]
%n=2: &281424[0,0] + 231[0,\frac{1}{2}] + 21[0,\frac{1}{2}] \\[2mm]
%&\ \ \  +528[\frac{1}{2},\frac{1}{2}] + 21[\frac{1}{2},1] 
% +[1,\frac{1}{2}]+ [1,\frac{3}{2}]
\end{equation}  
%{\bf \left(\frac{1}{2},\frac{1}{2}\right)} \\ [ 5 mm] 
%n=2: & 280962 {\bf \left(0,0\right)}+ 486  {\bf \left(\frac{1}{2},\frac{1}{2}\r%ight)} - 2 {\bf \left(1,1\right)}\\ [5 mm]
%n=3: & 15298438 {\bf \left(0,0\right)}+ 281448  {\bf \left(\frac{1}{2},\frac{1}%{2}\right)} + 486 {\bf \left(1,1\right)} \\ [3 mm]  &
%-2 {\bf \left(\frac{3}{2},\frac{3}{2}\right)} -2 ({\bf \left(1,0\right)} +{\bf %\left(0,1\right)}) \\ [5 mm]
%n=4: &  410133612 {\bf \left(0,0\right)}+   16209886 {\bf \left(\frac{1}{2},\frac{1}{2}\right)} +   281446 {\bf \left(1,1\right)}\\[3 mm] &+   486 {\bf \left(\frac{3}{2},\frac{3}{2}\right)} -2 {\bf \left(2,2\right)} + 
%  486({\bf \left(1,0\right)} +{\bf \left(0,1\right)})\\ [3 mm]& -2 
%\left({\bf \left(\frac{1}{2},\frac{3}{2}\right)} +{\bf \left(\frac{1}{2},\frac{%3}{2}\right)}\right) 
%\end{array}
%\end{equation*}

After expanding formula (8.1) of \cite{CKK} with the
refined invariants \eqref{fddw} for the STU model $X$
in class $(d_1,d_2)=(0,1)$, we obtain predictions
for the Betti realizations of the following stable pairs
motives:
\begin{eqnarray*}
u^{-2}[P_0(X,(0,1))] & = & u^{-2} + 2 + u^2\, , \\
{u^{-3}[P_1(X,(0,1))]} & = & u^{-3} + 3 u^{-1} + 488 + 3 u + u^{3}\ .
\end{eqnarray*}
The above predictions exactly match the expected geometry
\begin{eqnarray*}
P_0(X,(0,1)) & \cong&  \proj^1 \times \proj^1\, , \\
P_1(X,(0,1)) & \cong & X \ .
\end{eqnarray*}
In fact, the predictions for the fiber class $(0,1)$ case match for the
moduli spaces 
$P_m(X,(0,1))$ of stable pairs for all Euler characteristics $m$.

Conjecture D proposes an exact solution for the Betti realization of
the stable pairs motivic invariants for the STU model $X$ in
fiber classes.  Further values of the refined invariants
for the STU model are given below.

%\begin{eqnarray*}  
%(0,1): & 488 { [0,0]}+ [\frac{1}{2},0] + [\frac{1}{2},1] \\[5mm]
%(1,1): &280964[0,0] + [0,\frac{1}{2}] \\[2mm] & + [\frac{1}{2},0]+488[\frac{1}{%2},\frac{1}{2}] 
%  + [\frac{1}{2},1]\\[2mm] & + [1,\frac{1}{2}]+[1,\frac{3}{2}] \\[5mm]
%(2,1): &15928440[0,0] + 2[0,\frac{1}{2}] + [0,\frac{3}{2}] \\ [2mm]
%&2[\frac{1}{2},0]+281452[\frac{1}{2},\frac{1}{2}] 
%  +2[\frac{1}{2},1]\\[2mm]
% & + 2[1,\frac{1}{2}]+488[1,1] + [1,\frac{3}{2}] \\ [2mm]
%& + [\frac{3}{2},1] + [\frac{3}{2},2] \\[5mm]
%(3,1): &410133618[0,0] + 4[0,\frac{1}{2}] + 488[0,1]+[0,\frac{3}{2}] \\ [2mm]
%&3[\frac{1}{2},0]+16209892[\frac{1}{2},\frac{1}{2}] 
%  +4[\frac{1}{2},1] + [\frac{1}{2},2]\\[2mm]
% & + 488[1,0]+ 4[1,\frac{1}{2}]+281452[1,1] + 3[1,\frac{3}{2}] \\ [2mm]
%& + [\frac{3}{2},0]+ 2[\frac{3}{2},1]+488[\frac{3}{2},\frac{3}{2}] + 
%[\frac{3}{2},2] \\ [2mm]
%& + [2,\frac{3}{2}] + [2,\frac{5}{2}] 
%\end{eqnarray*}  

Checking the above prediction for $P_0(X,(n,1))$ is easy
for all $n\geq 0$.
Further checks in the case $(d_1,d_2)=(1,1)$ have been completed
(and match Conjecture D). 
Determining the
moduli space and the superpotential  becomes
harder as the Euler characteristic and the curve class increase.

\begin{table}[h]
\begin{center} 
%{% \footnotesize 
%\begin{tabular} {|c|cc|} \hline $R^0_{\frac{i}{2},\frac{j}{2}}$  
%      & i=0 & 1 \\  \hline 
%j=0 & & 1\\ \hline  \end{tabular}} 
\hspace{0.0cm}
\begin{tabular} {|c|ccc|} \hline  ${\mathsf{N}^{(0,1^{})}_{\frac{i}{2},\frac{j}{2}}}_{}^{}$  & j=0 & 1 &2  \\  
\hline  i=0 & 488 &  & \\ 1  &1 &  & 1 \\  \hline \end{tabular} 
\hspace{0.0cm}
\begin{tabular} {|c|cccc|} \hline 
 ${\mathsf{N}^{(1,1)}_{\frac{i}{2},\frac{j}{2}}}_{}$   & j=0 & 1 & 2 & 3 \\  \hline
 i=0 &280964  &1   & &  \\  
1 & 1 & 488 &1 &  \\  
2 &  & 1 &  &1  \\  \hline \end{tabular} \hspace{0.0cm}
%\end{center}
%\end{table}
\vskip .2 truecm
%\begin{table}[h]
%\begin{center}
\begin{tabular} {|c|ccccc|} \hline  
${\mathsf{N}^{(2,1)}_{\frac{i}{2},\frac{j}{2}}}_{}$ 
           & j=0 & 1               & 2          & 3  & 4  \\  \hline 
i=0     &15928440  &2           &        & 1  & \\  
1        & 2               & 281452 & 2    &    & \\  
2        &                  & 2           & 488& 1&  \\  
3        &                  &             &1      &    &1 \\  \hline \end{tabular} 
\vskip .2 truecm
\begin{tabular} {|c|cccccc|} \hline  ${\mathsf{N}^{(3,1)}_{\frac{i}{2},\frac{j}{2}}}_{}$  
&j=0 & 1 & 2 & 3 & 4 & 5 \\  \hline 
i=0 &410133618  & 4 & 488 & 1 & & \\  
1 & 3 & 16209892 & 4 &  & 1 & \\  
2 &  488 & 4 & 281452 & 3 & & \\   
3 & 1 &  & 2 & 488 & 1&  \\ 
4 &   &  &   & 1 & & 1 \\ 
\hline \end{tabular}
% \end{center}
% \end{table}
% \begin{table}[h]
% \begin{center} 
% \begin{tabular} {|c|cccccc|} \hline $R^4_{\frac{i}{2},\frac{j}{2}}$
% & j=0 & 1 & 2 & 3 & 4 & 5 \\  \hline i=0 & 84777 
%&  & 253 &  &  &  \\  1 &  & 16171 &  & 22 &  &  \\  2 & 
% 253 &  & 2254 &  & 1 &  \\ 3 &  & 22 &  & 253 &  &  \\  
% 4 &  &  & 1 &  & 21 &  \\  5 &  &  &  &  &  & 1 \\  \hline \end{tabular} 
\vspace{.5 cm}
\caption{Refined invariants for the STU model in fiber classes}
\label{tableSTU}
\end{center}
\end{table}

%\section{Local Calabi-Yau manifolds} 
%\label{localmodels}

%One obvious possibility to decouple the complex moduli is to 
%look at local models. In particular for all del Pezzo surfaces 
%$B$ embedded in a Calabi-Yau three manifold one can take a local 
%limit in which the local non-compact Calabi-Yau is described 
%by the total space of the canonical line bundle 
%${\cal O}(K_B)\rightarrow B$. We start with the del Pezzo's 
%which are elliptic fibrations over $\mathbb{P}^1$  

\section{Summary}

Our goal in the paper is to present a conjectural framework for
the evaluation of the motivic stable pairs invariants of 
 $K3$ surfaces in all curve classes.
The first step is a definition using suitably Noether-Lefschetz
transverse algebraic families. Conjectures A and B predict
a deformation invariance  for the associated Betti realization.
These new Betti properties go beyond the older numerical invariance.
Conjecture C then reduces the entire theory to the primitive (and
irreducible)
case which is determined by the Kawai-Yoshioka calculation.

In Conjecture D, we propose a refined Pairs/Noether-Lefschetz
correspondence for the STU model $X$. Together,
Conjectures A-D provide a rich non-toric setting where 
motivic invariants are well-behaved.

For Calabi-Yau 3-folds, there are three basic approaches to
curve counting (along with several variations):
\begin{enumerate}
\item[(i)] the Gromov-Witten theory of stable maps,
\item[(ii)]the Donaldson-Thomas theory of stable pairs, 
\item[(iii)] the
Gopakumar-Vafa theory of stable sheaves with $\chi=1$.
\end{enumerate}
While (i) and (ii) are foundationally well settled
and proven equivalent in many cases, the
precise formulation of (iii) is not yet clear
(and the equivalence is not understood).
As we have already mentioned, for a motivic theory, only 
the sheaf theoretic approaches (ii) and (iii) are
at present possible.

When the moduli space of stable sheaves with 1-dimensional support and
Euler characteristic 1 is {\em nonsingular}, approach (iii)
yields a clear proposal. Nonsingularity of the moduli spaces
is certainly rare.
However, in the Appendix
by R. Thomas,  nonsingularity is proven for the
stable sheaves associated to $K3$ surfaces in algebraic families
transverse to the relevant Noether-Lefschetz divisors. Nonsingularity
is proven for all (even imprimitive) curve classes.
The outcome provides a direct approach to the GV motivic
theory of $K3$ surfaces which matches precisely with our stable
pair predictions. In particular, the GV approach predicts
the divisibility invariance of the original KKV formula.

A path proving Conjectures A, B, and C is to start with the
GV moduli spaces as discussed in the Appendix and wall-cross
to stable pairs.{\footnote{See \cite{Toda} where exactly
the same strategy is applied to study the Euler characteristics of
moduli spaces of stable pairs on $S\times \com$. The difficulties
left open \cite{Toda} arise here as well.}}
 Various difficulties aries: the most signficant
of which may be the inability to achieve Euler characteristic
1 by twisting by line bundles in the imprimitive case. 
Nevertheless, proving a Pairs/GV correspondence appears
the most promising approach to Conjectures A, B, and C at the
moment.

\pagebreak 

\appendix
\section{Refined KKV from refined Gopakumar-Vafa}

\begin{center}{{\em by} R. P. Thomas}
\end{center}
\vspace{10pt}

%\noindent\textbf{Acknowledgements.} Grateful thanks are due to Arend Bayer for generous assistance in explaining the results of \cite{BM} in helping with \eqref{Beau} below. \medskip

Gopakumar and Vafa have suggested a conjectural approach to 
defining BPS numbers via moduli of sheaves. S. Katz \cite{SKatz}
proposed using stable sheaves of Euler characteristic $1$.
The papers \cite{HST,KL} make BPS and motivic predictions
of GV invariants via the geometry of such stable sheaves.
We show here
for fibre classes of sufficiently
Noether-Lefschetz transverse $K3$-fibred 3-folds, 
the proposals of \cite{HST,KL} work
 perfectly for both the BPS numbers and their refinements: 
we recover the generating series of Hodge numbers of Hilbert schemes 
of points on $K3$ surfaces \eqref{DDD} as considered in the current paper.

\medskip

By a flat family of stable sheaves on the fibres of a projective family
$$\pi\colon\Y\to B\, ,$$
we mean a coherent sheaf $\F$ on $\Y$ which is flat over $B$, such that $\F_b$ is stable on $\Y_b$ for any closed point $b\in B$. We will need the following standard result. 

\begin{lem} \label{piiso}
Let $\F$ be a flat family of stable sheaves on the fibres of $\pi\colon\Y\to B$. Then $\pi_*\hom(\F,\F)=\O_B$.
\end{lem}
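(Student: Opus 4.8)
The plan is to prove this fibrewise-pointwise statement by combining stability with properness of $\pi$. First I would recall the elementary fact that a stable sheaf is simple: for a stable sheaf $\F_b$ on a fibre $\Y_b$, every endomorphism is a scalar, so $\Hom(\F_b,\F_b)=\bC$, i.e. $h^0(\Y_b,\hom(\F_b,\F_b))=1$. This is the key input that pins down the rank of the pushforward to be $1$.

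Next I would pass from the fibrewise statement to the global sheaf $\pi_*\hom(\F,\F)$ on $B$. The sheaf $\hom(\F,\F)=\ext^0(\F,\F)$ is itself a coherent sheaf on $\Y$, flat over $B$ (or at least one can control its fibrewise behaviour), and I would apply cohomology and base change. Since $\dim_\bC H^0(\Y_b,\hom(\F,\F)|_{\Y_b})=1$ is constant in $b$, base change gives that $\pi_*\hom(\F,\F)$ is a line bundle on $B$ whose fibre at each $b$ is canonically $\Hom(\F_b,\F_b)=\bC$. The care needed here is to ensure the restriction $\hom(\F,\F)|_{\Y_b}$ really is $\hom(\F_b,\F_b)$; this holds because $\F$ is flat over $B$, so forming $\hom$ commutes with restriction to a fibre in this locally-free-resolution sense, and the constancy of $h^0$ then forces $h^{>0}$ of the relevant base-change complex to contribute nothing to the rank-one conclusion.

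To upgrade the line bundle $\pi_*\hom(\F,\F)$ to the \emph{trivial} line bundle $\O_B$, I would exhibit the canonical global section given by the identity endomorphism $\id_\F\in\Hom(\F,\F)$. This is a nowhere-vanishing global section of $\pi_*\hom(\F,\F)$: at each point $b$ it restricts to $\id_{\F_b}\in\Hom(\F_b,\F_b)=\bC$, which is a nonzero scalar, so the section vanishes nowhere. A nowhere-vanishing section of a line bundle trivializes it, giving the canonical isomorphism $\O_B\xrightarrow{\ \sim\ }\pi_*\hom(\F,\F)$ sending $1\mapsto\id_\F$, which is the claimed identity.

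I expect the main obstacle to be the base-change bookkeeping in the middle step: making sure that simpleness of each $\F_b$ (a statement about $\Hom$ on fibres) really does produce a locally free rank-one pushforward, rather than merely controlling fibre dimensions set-theoretically. The clean way around this is to invoke Grauert's theorem (constancy of $h^0$ for a flat family forces $\pi_*$ to be locally free and compatible with base change), after first checking that $\hom(\F,\F)$ is flat over $B$ and that its restriction to each fibre agrees with $\hom(\F_b,\F_b)$. Everything else — simpleness, the identity section, trivialization — is formal once that compatibility is in place.
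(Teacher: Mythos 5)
Your proof is correct and follows essentially the same route as the paper: simplicity of stable sheaves pins down the fibrewise endomorphisms to scalars, base change transfers this to the pushforward, and the identity endomorphism provides the trivializing section. The paper phrases the last two steps slightly more economically by showing the cokernel of $\O_B\cdot\id\to\pi_*\hom(\F,\F)$ vanishes fibrewise (avoiding the need to first establish local freeness via Grauert), but the mathematical content is the same.
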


\begin{proof}
Stable sheaves are simple, so
$$\Hom(\mathcal F_b,\mathcal F_b) \, \big/  \,   \C\cdot\id\ =\ 0
$$
for all closed point $b\in B$. It follows by base change that 
$$
\pi_*\hom(\mathcal F,\mathcal F)\, \big/\, \O_B\cdot\id\,=\ 0,
$$
which gives the result.
\end{proof}

Now fix $T\rt{\pi}(\Delta,0)$, a $K3$-fibred 3-fold with central fibre 
$\pi^{-1}(0)$ given by $$\iota\colon S\into T\, .$$ Suppose the curve $\Delta$ is transverse to $NL_\beta$ for some fixed
$$
\beta\in H^2(S,\Z)\cap H^{1,1}(S).
$$
By shrinking $\Delta$ if necessary, we may assume that $\pi$ is smooth and intersects $NL_\beta$ only in $0$.

Consider the moduli space $\MM_{1,\iota_*\beta}(T)$ of stable{\footnote{Stability is taken
with respect to a fixed polarization of $T$.}} 
%$$
%\MM=\MM_{1,\iota_*\beta}(T)
%$$
dimension 1 sheaves on $T$ with fundamental class $\iota_*\beta$ and holomorphic Euler characteristic $\chi=1$. The last condition ensures that no semistable sheaves exist, so $\MM_{1,\iota_*\beta}(T)$ is projective.
By simplicity, any stable sheaf must be scheme theoretically
supported on a single fibre of $\pi$. 
By the Noether-Lefschetz condition, there is an open and closed
component
$$\MM^\star \subset \MM_{1,\iota_*\beta}(T)$$
of sheaves with scheme theoretic support on $S$.
Hence, the obvious map
\beq{pushf}
\iota_*\colon\MM_{1,\beta}(S)\Into\MM^\star
\eeq
is a set-theoretic bijection.
%\footnote{There may be other components $\MM_{1,\gamma}(S)$ of $\MM$ far away
%with $\iota_*\gamma=\iota_*\beta$ .}

\begin{prop} \label{gxgx2}
The map \eqref{pushf} is an isomorphism of schemes.
\end{prop}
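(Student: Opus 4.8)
The plan is to upgrade the set-theoretic bijection to a scheme isomorphism by showing that $\iota_*$ identifies the two moduli functors: every flat family of stable sheaves on $T$ lying in $\mathcal{M}^\star$ is the pushforward of a flat family on $S$. One direction is immediate: pushing the universal sheaf on $S\times\mathcal{M}_{1,\beta}(S)$ forward along $\iota\times\mathrm{id}$ preserves flatness and stability and sends the class to $\iota_*\beta$, so $\iota_*$ is a morphism. To build the inverse I would take a scheme $B$ and a $B$-flat family $\mathcal{G}$ on $T\times B$ with fibres in $\mathcal{M}^\star$, and aim to show that $\mathcal{G}$ is scheme-theoretically supported on $S\times B$; then $\mathcal{G}=(\iota\times\mathrm{id})_*\mathcal{F}$ for a unique $B$-flat family $\mathcal{F}$ on $S\times B$, giving the inverse $\mathcal{M}^\star\to\mathcal{M}_{1,\beta}(S)$.

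The first step is to pin down the action of the base coordinate. After shrinking $\Delta$, let $t$ be a coordinate cutting out the reduced point $0$, so $S=\{\pi^*t=0\}$. Multiplication by $\pi^*t$ is an $\mathcal{O}_{T\times B}$-linear endomorphism of $\mathcal{G}$, hence a global section of $\hom(\mathcal{G},\mathcal{G})$; pushing forward to $B$ and applying Lemma~\ref{piiso} to the family $\mathcal{G}$ over $B$, this endomorphism is multiplication by a function $\lambda\in\mathcal{O}(B)$. Thus $\pi^*t-\lambda$ annihilates $\mathcal{G}$, and $\mathcal{G}$ is scheme-theoretically supported on the fibres over the section $\lambda\colon B\to\Delta$.

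The heart of the matter, and the step I expect to be the main obstacle, is to prove $\lambda=0$. At each closed point $b$ the fibre $\mathcal{G}_b$ is supported on $S$, so $\lambda(b)=0$ and $\lambda$ is nilpotent; the entire difficulty is infinitesimal, namely excluding a nonzero nilpotent $\lambda$ that would drag the scheme-theoretic support off $S$ into a neighbouring formal fibre. This is exactly where Noether-Lefschetz transversality enters. The fundamental class of $\mathcal{G}$ is $\beta$, which is therefore algebraic on every fibre of the pulled-back family $T\times_{\Delta,\lambda}B\to B$; since $\mathrm{NL}_\beta\subset\Delta$ carries the scheme structure defined by the vanishing of the $(2,0)$-part of $\beta$ under the period map, the algebraicity of $\beta$ over $B$ must force $\lambda\colon B\to\Delta$ to factor scheme-theoretically through $\mathrm{NL}_\beta$. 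Transversality means $\mathrm{NL}_\beta$ is the reduced point $0$, so this factoring yields $\lambda=0$. Equivalently, the obstruction can be seen directly in deformation theory: the normal-direction deformations of $\iota_*F$ are controlled by the cup-product map $\mathrm{Hom}_S(F,F)\to\mathrm{Ext}^2_S(F,F)$ with a class $e\in\mathrm{Ext}^2_S(F,F)\cong\mathbb{C}$, the Kodaira-Spencer class of $T/\Delta$ paired with $\beta$; transversality is precisely $e\neq 0$, which makes this map an isomorphism and kills the extra deformations. Making the first, family-level version of this argument rigorous for arbitrary (non-reduced) $B$ — i.e. establishing that the scheme structure of $\mathrm{NL}_\beta$ really does obstruct the nilpotent $\lambda$ — is the crux.

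With $\lambda=0$ for all $B$, the family $\mathcal{G}$ is scheme-theoretically supported on $S\times B$ and descends to $\mathcal{F}$ on $S$, producing the inverse morphism and hence the isomorphism $\iota_*\colon\mathcal{M}_{1,\beta}(S)\to\mathcal{M}^\star$; in particular $\mathcal{M}^\star$ is smooth, isomorphic to the holomorphic-symplectic space $\mathcal{M}_{1,\beta}(S)$. As a tangent-level consistency check, the long exact sequence obtained by applying $\mathrm{RHom}_S(-,F)$ to the triangle relating $L\iota^*\iota_*F$ to $F$ and $F\otimes N^{-1}[1]$ (with $N=N_{S/T}\cong\mathcal{O}_S$) collapses, once the connecting map $\cup\,e$ is an isomorphism, to $\mathrm{Ext}^1_T(\iota_*F,\iota_*F)\cong\mathrm{Ext}^1_S(F,F)$, matching the differential of $\iota_*$ and confirming that transversality removes the naive extra normal modulus.
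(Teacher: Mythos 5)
Your proposal follows essentially the same route as the paper: Lemma~\ref{piiso} applied to the tautological $\mathcal{O}_\Delta$-action produces the classifying map to $\Delta$, and Noether--Lefschetz transversality (via the scheme-theoretic definition of $NL_\beta$ by the vanishing of the $(0,2)$-projection of $\beta$) forces that map to be the constant map to $0$ even over a non-reduced base. The ``crux'' you flag is resolved in the paper exactly as you anticipate, by passing to the formal neighbourhood of a closed point and invoking Bloch's identification $H^2_{dR}(T\times_\Delta U/U)\cong H^2(S,\mathbb{C})\otimes\mathcal{O}_U$ so that the class of the family is $\beta\otimes 1\in F^1$ and its $(0,2)$-part is the pullback under $f$ of the class cutting out $NL_\beta$.
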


\begin{proof}
The statement is local, so we may work on a Zariski open subset $U\subset\MM$. Shrinking $U$ if necessary, we may assume there is a universal sheaf $\F$ on $U\times T$. We must show that $\F$ is the push forward of a sheaf on $U\times S$. The classifying map of the resulting flat family of stable sheaves on $S$ will define the inverse map to \eqref{pushf}.

Consider the composition
$$
\Gamma(\O_{\Delta\times U})\Rt{p^*}\Gamma(\O_{T\times U})\Rt\id\Hom(\mathcal F,\mathcal F)\ \cong\ \Gamma(\O_U),
$$
where the final isomorphism is given by Lemma \ref{piiso}. The map is
$\Gamma(\O_U)$-linear so defines a $U$-point of $\Delta\times U$. Its ideal is the kernel of the above composition, and by its definition this ideal annihilates $\mathcal F$.

Thus we get a map $$f\colon U\To \Delta$$ such that $\mathcal F$ is the pushforward of a sheaf $F$ on $T\times_\Delta U\,\subset T\times U$. Since $\F$ is flat over $U$, so is $F$.

On closed points $f$ is the constant map to $0\in \Delta$. To prove $f$ is
a constant map of schemes, we may replace $U$ by the formal neighbourhood of any closed point. The result will follow below from Hodge theory and the
Noether-Lefschetz transversality condition.

We have shown that $F$ is a flat family of stable sheaves on the smooth family $$T\times_\Delta U\Rt\pi U$$ of $K3$ surfaces over $U$. The fundamental class 
of $F$ lies in
$$
F^1H^2_{dR}(T\times_\Delta U/U),
$$
the part of the Hodge filtration defined by $\Omega^{\ge1}_{T\times_\Delta U/U}$.

Via the isomorphism \cite[Proposition 3.8]{Blo}
$$
H^2_{dR}(T\times_\Delta U/U)\ \cong\ H^2(S,\C)\otimes_\Delta \O_U,
$$
the class of $F$ is $\beta\otimes1$. Since it lies in $F^1$, its projection
$$
[\beta\otimes1]^{0,2}\in H^2_{dR}\big(T\times_\Delta U/U\big)\, \Big/\, 
F^1H^2_{dR}\big(T\times_\Delta U/U\big)
$$
vanishes identically. This class is the pull back via $f$ of the analogous class
$$
[\beta\otimes1]^{0,2}\in H^2_{dR}\big(\widehat T/\widehat \Delta\big)\, 
\Big/ \, F^1H^2_{dR}\big(\widehat T/\widehat \Delta\big),
$$
where $\widehat \Delta$ is the completion of $\Delta$ at $0$, 
and $\widehat \Delta$ is the completion of $T$ along $S$.

But the scheme theoretic Noether-Lefschetz locus is defined precisely by the vanishing of $[\beta\otimes1]^{0,2}$, and we assumed this is $\{0\}\subset \Delta$. Therefore $f|_\Delta$ is the constant map to $0\in \Delta$.
\end{proof}

By Proposition \ref{gxgx2},  $\MM^\star$ is a moduli space of stable sheaves on $S$. By \cite[Corollary 3.5]{Y},  $\MM^\star$ is nonsingular, nonempty, and deformation equivalent to $\Hilb^g(S)$, where $2g-2=\beta^2$. In particular, 
$\MM^\star$ has canonical orientation data.

We expect the Chow support map to the complete linear system in class $\beta$,
\begin{equation}\label{Beau}
\MM^\star\To|\O(\beta)|,
\end{equation}
to factor through a Lagrangian fibration onto a projective space
\begin{equation}\label{Pg}
\PP^g\,\subset\ |\O(\beta)|.
\end{equation}
The proof in most cases is well-known:

\begin{itemize}
\item If $\beta^2>0$ and $\beta$ is nef, then \eqref{Beau} is the classical {\em Beauville integrable system}, and the image $\PP^g$ is the whole linear system $|\O(\beta)|$. The generic element $C$ of $|\O(\beta)|$ is a 
nonsingular irreducible curve of genus 
$$g=1+\beta^2/2\, ,$$ and the fibre of \eqref{Beau} over $C$
 is the nonsingular Lagrangian torus $\Pic_gC\subset \MM^\star$.

\item
If $\beta^2=0$ and $\beta$ is nef  of divisibility $m>0$, 
then $\beta/m$ is the class of a fibre in an elliptic fibration $S\to\PP^1$.  
The sheaves parameterized by $\MM^\star$ are (the pushforward to $S$ of) 
rank $m$ sheaves on a single fibre, and the fibration \eqref{Beau} maps 
this sheaf to its support in $\PP^1=|\O(\beta/m)|$. In turn this $\PP^1$ embeds in $\PP^m=|\O(\beta)|$ as in \eqref{Pg} by the $m$th Veronese embedding.

%\item
%When $\beta^2\ge0$ but $\beta$ is not nef, we proceed as in \cite[Section 11]{BM}. A series of spherical twists in the structure sheaves of $-2$-curves makes $\beta$ nef, and takes the Gieseker stable sheaves of $\M$ to Bridgeland stable complexes with nef curve class. Wall crossing back to the Giesker stability chamber of the space of Bridgeland stability conditions, the moduli space undergoes only a finite sequence of birational transformations. The upshot is that $\M$ is birational to a moduli space of Gieseker stable sheaves in a nef class of the same square.
%Since the wall-crossing transformations leave the support of a complex unaltered, the birational map preserves the fibres of the Lagrangian fibration, which is therefore is preserved.

\item
If $\beta^2=-2$, $\MM^\star$ is a single point, and the claim
is trivial. 

\item
If $\beta^2 <-2$, $\MM^\star$ is empty.
\end{itemize}

The remaining case where $\beta^2\geq 0$ with $\beta$ not nef is likely to
follow from the above cases after using wall-crossing and the methods of 
\cite[Section 11]{BM}. Alternatively, it is not hard to show directly that the fibres of the map \eqref{Beau} are Lagrangian; that the image should then be $\PP^g$ is a standard conjecture (proved by Hwang when the image is nonsingular).
We leave the matter open here.

Finally, for the
cases treated above, we can follow the calculation of \cite[Section 8.3]{KL} (which in turn follows \cite{HST}) to determine the refined Gopakumar-Vafa 
invariants using the perverse Leray filtration for the fibration.\footnote{The cited references \cite{HST, KL} use the fibration $\Hilb^g(S)\to\PP^g$ induced by an elliptic fibration $S\to\PP^1$. This is a deformation of a Beauville-Mukai system (relative compactified Pic$_g$) of any complete linear system of genus $g$ in a primitive curve class on a K3 surface. (For instance if the surface is an elliptic fibration with a section, we may take the curve class to be a section plus $g$ fibres.) Thus, when our curve class $\beta$ is \emph{primitive}, our fibration $\MM^\star\to\PP^g$ can be deformed to $\Hilb^g(S)\to\PP^g$ to deduce that the Hodge numbers associated to the perverse Leray filtration agree with those in \cite{HST, KL}.

We are grateful to Davesh Maulik and Junliang Shen for pointing out that for multiple curve classes, there is no such deformation \emph{through Lagrangian fibrations}, due to results of Markman \cite[Theorem 1.5 and Proposition 1.7]{Ma}. Fortunately however, Shen and Yin have recently proved the beautiful result that the perverse Leray Hodge numbers are equal to the ordinary Hodge numbers for any smooth holomorphic symplectic variety with a Lagrangian fibration \cite[Theorem 0.2]{SY}. Since $\MM^\star$ and $\Hilb^g(S)$ are deformation equivalent (even if not as Lagrangian fibrations) their ordinary Hodge numbers agree. Thus the perverse Hodge numbers of $\MM^\star\to\PP^g$ are the same as those of $\Hilb^g(S)\to\PP^g$ used in \cite{HST,KL}.} At the level of Poincar\'e polynomials, the generating function\footnote{See the equation before \cite[Equation (8.3)]{KL}, before the specialization  $t_R=-1$.} obtained is:
$$
\prod_{m\ge1}\frac1{(1-t_Lt_Rq^m)(1-t_L^{-1}t_Rq^m)(1-t_Lt_R^{-1}q^m)(1-t_L^{-1}t_R^{-1}q^m)(1-q^m)^{20}}\,.
$$
Substituting $(u,v)$ for $(t_L,t_R)$ gives the refined KKV generating function 
\eqref{DDD} of KKP.

\vspace{10pt}
\noindent{\em Acknowledgements.} Grateful thanks are due to A. Bayer for generous assistance in explaining the results of \cite{BM} and helping with \eqref{Beau}. \medskip

\noindent Department of Mathematics\\
University of Illinois\\
katz@math.uiuc.edu

\vspace{+8 pt}
\noindent Physikalisches Institut\\
Universit\"at Bonn\\
aklemm@physik.uni-bonn.de

\vspace{+8 pt}
\noindent
Departement Mathematik\\
ETH Z\"urich\\
rahul@math.ethz.ch

\end{document}